\newtheorem{thm}{Theorem}[subsection]
\newtheorem*{thm*}{Theorem}
\newtheorem{lem}[thm]{Lemma}
\newtheorem*{prob*}{Problem}
\newtheorem{prop}[thm]{Proposition}
\newtheorem*{prop*}{Proposition}
\newtheorem{cor}[thm]{Corollary}
\newtheorem*{cor*}{Corollary}
\theoremstyle{definition}
\newtheorem{defn}[thm]{Definition}
\newtheorem*{defn*}{Definition}
\newtheorem{remark}[thm]{Remark}
\newtheorem*{question*}{Question}
\newtheorem*{Pquestion*}{Popa's question}
\newtheorem{conv}[thm]{Convention}
\newtheorem*{conv*}{Convention}
\def\dotminussym#1#2{%
  \setbox0=\hbox{$\m@th#1-$}%
  \kern.5\wd0%
  \hbox to 0pt{\hss\hbox{$\m@th#1-$}\hss}%
  \raise.6\ht0\hbox to 0pt{\hss$\m@th#1.$\hss}%
  \kern.5\wd0}
\begin{document}
%%%%%%%%%%%%%%%%%%%%%%%%%%%%%%%%%%%%%%%%%%%%%%

\title[Two new families of finitely generated simple groups of homeomorphisms]{Two new families of finitely generated simple groups of homeomorphisms of the real line}
\author{James Hyde}
\address{Malott Hall, Room 590, Cornell University, Ithaca NY}
\email{jameshydemaths@gmail.com}

\author{Yash Lodha}
\address{Center for mathematical challenges, Korea institute of advanced study, 85 Hoegi-ro, Cheongnyangri-dong, Dongdaemun-gu, Seoul}
\email{yashlodha763@gmail.com}
\urladdr{https://yl7639.wixsite.com/website}
\thanks{Yash Lodha is supported by the Samsung Science and Technology Foundation under Project Number SSTF-BA1301-51 and by a KIAS Individual Grant at the Korea Institute for Advanced Study}

\author{Crist\'{o}bal Rivas}
\address{Dpto. de Matem\'aticas y C.C., Universidad de Santiago de Chile}
\email{cristobal.rivas@usach.cl}

\maketitle

\begin{abstract}
The goal of this article is to exhibit two new families of finitely generated simple groups of homeomorphisms of $\mathbf{R}$.
These families are strikingly different from existing families owing to the nature of their actions on $\mathbf{R}$,
and exhibit surprising algebraic and dynamical features. In particular, one construction provides the first examples of finitely generated simple groups of homeomorphisms of the real line which also admit a minimal action by homeomorphisms on the circle. This provides new examples of finitely generated simple groups with infinite commutator width, and the first such left orderable examples.
Another construction provides the first examples of finitely generated simple left orderable groups that admit minimal actions by homeomorphisms on the torus.
\end{abstract}

\section{Introduction}

In \cite{HydeLodha} the first two authors constructed the first examples of finitely generated infinite simple groups of homeomorphisms of $\mathbf{R}$.
In a subsequent article \cite{MatteBonTriestino}, Matte Bon and Triestino demonstrated that certain groups of piecewise linear homeomorphisms of flows are also examples of this phenomenon.
Whether such groups exist had been a longstanding open question of Rhemtulla \cite{Martinez} 
(also asked by Clay and Rolfsen in \cite{ClayRolfsen}, by Navas in \cite{Navas2}, and in the ``Kourkova Notebook"  \cite{Kourkova}.)

The question as stated originally asks whether finitely generated simple left orderable groups exist. However, note that left orderability for countable groups is equivalent to requiring that they admit a faithful action by orientation preserving homeomorphisms of the real line.
Such constructions are difficult since achieving the combination of finite generation and simplicity presents certain technical challenges owing to the lack of compactness of $\mathbf{R}$. Moreover, there also certain natural obstructions to simplicity for various finitely generated groups of homeomorphisms of $\mathbf{R}$. If such a group is amenable, then it admits a homomorphism onto $\mathbf{Z}$ (see \cite{WitteMorris}). The same holds if the group admits a nontrivial action by $C^1$-diffeomorphisms on a closed interval (or even $[0,1)$, see \cite{Navas}).
For a more detailed discussion around these issues, we refer the reader to \cite{HydeLodha}.
%In a subsequent article (\cite{HydeLodhaNavasRivas}), it was demonstrated that the groups are uniformly perfect: each element can be expressed as a product of at most $3$ commutators.
%As a consequence, every action of each group in the family on the real line is \emph{strongly contracting}.
%This means that every faithful action on $\mathbf{R}$ by homeomorphisms without global fixed points is semiconjugate to a minimal action which is proximal.
%(We remark that the resulting minimal action will also be faithful, since the groups are simple.)

The goal of this article is to exhibit two new families of examples that exhibit new, strikingly different dynamical and algebraic features, compared to existing families.
Groups in the first family are finitely generated by definition, however it is surprising that they are simple, and the proof of simplicity involves an intricate analysis of the group action.
The groups in the second family emerge as the derived subgroups of certain well known examples called \emph{fast $n$-ring groups} (defined independently by Brin, Bleak, Kassabov, Moore and Zaremsky in \cite{BBKMZ} and by the second author with Kim and Koberda in \cite{KKL}.). The simplicity of these examples is less surprising, however it is surprising that they are finitely generated and left orderable, the proof of finite generation involves an intricate analysis of the group action.
%The new constructions exhibit dynamical and algebraic features which are quite distinct from the previous examples.

We now present the first family.
Recall that Thompson's group $T$ is the group of piecewise linear orientation preserving homeomorphisms of the circle $\mathbf{S}^1=\mathbf{R}/\mathbf{Z}$ such that:
\begin{enumerate}
\item Each linear part is of the form $2^n+d$ for $n\in \mathbf{Z},d\in \mathbf{Z}[\frac{1}{2}]/\mathbf{Z}$.
\item There are finitely many points where the slopes do not exist, and they lie in $\mathbf{Z}[\frac{1}{2}]$.
\end{enumerate} 

The group $\overline{T}<\textup{Homeo}^+(\mathbf{R})$ is the ``lift" of this action to the real line.
In particular, there is a short exact sequence $$1\to \mathbf{Z}\to \overline{T}\to T\to 1$$
Here the group $\mathbf{Z}$ is the group of integer translations of the real line, and it lies in the center of $\overline{T}$.
It is easily seen that $\overline{T}$ is finitely presented, since $T$ is finitely presented.
The group $\overline{T}$ was first studied by Ghys and Sergiescu in \cite{GhysSergiescu}, and it has several remarkable features.
For instance, it was observed recently by the first author with Belk and Matucci in \cite{BHM} that it contains an isomorphic copy of $\mathbf{Q}$,
and is the first example of an ``explicit" finitely presented group with this property.

One may modify the ``lift", $\overline{T}$, as follows.
Let $\mathbf{S}^1$ be as above, and consider the map $$\phi_{\lambda}:\mathbf{R}\to \mathbf{S}^1\qquad \mathbf{R}\to \mathbf{R}/\lambda \mathbf{Z}$$ for each $\lambda>0$.
The map $\phi_{\lambda}$ provides the alternative lift $\overline{T}_{\lambda}<\textup{Homeo}^+(\mathbf{R})$, which as an abstract group is isomorphic to $\overline{T}=\overline{T}_1$. Note that the center of $\overline{T}_{\lambda}$ is the group $\langle t\to t+n\lambda\mid n\in \mathbf{Z}\rangle$.
In spite of the fact that $\overline{T}, \overline{T}_{\lambda}$ are not simple, we prove the following:

\begin{thm}\label{main1}
Let $\lambda>1$ be irrational. The group $G_{\lambda}=\langle \overline{T}, \overline{T}_{\lambda}\rangle$ is simple.
\end{thm}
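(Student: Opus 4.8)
The plan is to show that the normal closure $\langle\langle h\rangle\rangle$ of an arbitrary nontrivial $h\in G_\lambda$ is all of $G_\lambda$. The backbone of the argument is the near-simplicity of the two visible subgroups. Write $\tau_1,\tau_\lambda$ for the translations $t\mapsto t+1$ and $t\mapsto t+\lambda$, which generate the centres of $\overline{T}$ and $\overline{T}_\lambda$. Since $\overline{T}$ is perfect with centre $\langle\tau_1\rangle$ and $\overline{T}/\langle\tau_1\rangle\cong T$ is simple (Ghys--Sergiescu), an elementary argument shows that every normal subgroup of $\overline{T}$ not contained in $\langle\tau_1\rangle$ is all of $\overline{T}$; the same holds for $\overline{T}_\lambda$ and $\langle\tau_\lambda\rangle$. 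Hence it suffices to prove that $N:=\langle\langle h\rangle\rangle$ contains a non-central element of $\overline{T}$ and a non-central element of $\overline{T}_\lambda$: the first forces $\overline{T}\le N$, the second forces $\overline{T}_\lambda\le N$, and therefore $N=\langle\overline{T},\overline{T}_\lambda\rangle=G_\lambda$. Two features of the action should be recorded first: the translations $\langle\tau_1,\tau_\lambda\rangle$ have dense orbits because $\lambda$ is irrational, so $G_\lambda$ acts minimally on $\mathbf{R}$; and $G_\lambda$ has \emph{no} nontrivial element of compact support, since a $1$-periodic support and a $\lambda$-periodic support cannot cancel off a compact set when $\lambda\notin\mathbf{Q}$. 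This absence of compactly supported elements is what blocks the usual localization arguments and is the source of the difficulty.

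The engine I would use is a commutator identity. Fix a small interval $I\subset(0,1)$ and choose $f,f'\in\overline{T}$ that are lifts of non-commuting elements of $T$ supported in the arc $I$, so that $\supp(f)=P:=\bigcup_{n}(I+n)$ and $\supp(f')=P'$ is a slightly larger $1$-periodic family. Suppose for the moment that we can locate an element $h'\in N$ with $h'(P)\cap(P\cup P')=\varnothing$. Then $g:=h'fh'^{-1}$ is supported off $P\cup P'$, hence commutes with $f$ and $f'$, and expanding $[[h',f],f']=[gf^{-1},f']$ and cancelling $g$ against $g^{-1}$ gives the clean identity $[[h',f],f']=[f^{-1},f']$. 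The left-hand side lies in $N$ (as $N$ is normal), while $[f^{-1},f']\in\overline{T}$ fixes every point outside $P\cup P'\neq\mathbf{R}$; being nontrivial with fixed points, it cannot be a nonzero power of the fixed-point-free $\tau_1$, so it is a non-central element of $N\cap\overline{T}$, exactly as needed. Running the identical computation with $f,f'\in\overline{T}_\lambda$ produces the required non-central element of $N\cap\overline{T}_\lambda$.

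Everything therefore reduces to a single displacement statement, and this is where I expect the main obstacle to lie: starting from an arbitrary nontrivial $h$, one must manufacture inside $N$ an element $h'$ that pushes a small $1$-periodic family entirely off itself (and off a slightly larger one), and symmetrically for period $\lambda$. A clean sufficient target is to show that $N$ contains a translation $\tau_c$ with $c\notin\mathbf{Z}$ and $c\notin\lambda\mathbf{Z}$: then $h'=\tau_c$ sends $P=\bigcup_n(I+n)$ to $\bigcup_n(I+c+n)$, which misses $P\cup P'$ as soon as $c\bmod 1$ is chosen in the complementary gap. The hard part is producing such a displacing element at all. Because there are no compactly supported elements, $h'$ must be controlled over the \emph{entire} line: one needs $h'(x_0+n)-(x_0+n)$ to stay away from a neighbourhood of $\mathbf{Z}$ for \emph{every} $n\in\mathbf{Z}$ at once, and a single given $h$ will typically fail this near $\pm\infty$, where it may drift back toward $1$-equivariance. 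The expected remedy, and the step demanding the intricate analysis of the action, is to combine $h$ with $\tau_\lambda$ and the dense translation subgroup to build an element of $N$ that behaves like a shift by an irrational amount on arbitrarily long stretches of $\mathbf{R}$; the incommensurability of $1$ and $\lambda$ is precisely what allows a $\lambda$-flavoured element to displace the $1$-lattice off itself, and vice versa.

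Once the displacement lemma is in hand on both sides, the engine delivers non-central elements of $N\cap\overline{T}$ and of $N\cap\overline{T}_\lambda$; by the normal-subgroup dichotomy for $\overline{T}$ and $\overline{T}_\lambda$ this gives $\overline{T}\le N$ and $\overline{T}_\lambda\le N$, whence $N=G_\lambda$. As $h\neq1$ was arbitrary, $G_\lambda$ is simple. In summary, the reductions and the commutator engine are routine; the entire weight of the theorem rests on the global displacement lemma, whose proof must exploit minimality together with the arithmetic incompatibility of the two periods.
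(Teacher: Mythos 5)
Your reduction and your commutator engine coincide with the paper's: the paper likewise reduces simplicity to producing, inside $N=\langle\langle h\rangle\rangle$, a non-central element of $\overline{T}$ and a non-central element of $\overline{T}_{\lambda}$, and it produces them by exactly your identity (in the paper's notation, $[h_1,[h_2,g_1^{-1}]]=[h_1,h_2]$ once $g_1\in N$ displaces the periodic support $X=\bigcup_{n}(I+n)$ off itself). The problem is that everything after this point in your write-up is a statement of intent rather than a proof. The displacement statement you isolate is precisely the paper's key Proposition (its Proposition 2.3): for each $c\in\{1,\lambda\}$, the normal closure of any nontrivial element contains a $g$ fixing $c\cdot\mathbf{Z}$ pointwise and satisfying $(x+cn)\cdot g>y+cn$ for all $n\in\mathbf{Z}$. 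This is the entire technical content of the theorem, and you leave it unproven; acknowledging that ``the entire weight of the theorem rests on the global displacement lemma'' does not discharge it.

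Moreover, the route you sketch toward it is not the one that works, and there is no visible way to make it work. Producing an honest translation $\tau_c\in N$ with $c\notin\mathbf{Z}\cup\lambda\mathbf{Z}$ before knowing $N=G_{\lambda}$ is essentially as hard as the theorem itself, and the paper never does this: its displacing element is not a translation but an element fixing $\mathbf{Z}$ pointwise. The machinery you are missing is the following chain. First, every element of $G_{\lambda}$ is Lipschitz, and (using the equidistribution of $m-k\lambda$ and a perturbation lemma for words $u_1v_1\cdots u_nv_n$ with $u_i\in\overline{T}$, $v_i\in\overline{T}_{\lambda}$) every $g\in G_{\lambda}$ is \emph{repetitive}: inside every sufficiently long interval there is an integer translate $[m,m+1]$ on which $g$ agrees with its own behaviour on $[0,1]$ up to $\epsilon$. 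Second, one localizes: $N$ contains a nontrivial element supported in a $1$-periodic family $\bigcup_{n}(n+J)$, $\overline{J}\subset(0,1)$ (this already uses a version of the commutator trick). Third, one upgrades this element so that on syndetically many unit intervals it agrees exactly with a periodized copy $\nu(g)$ of a nontrivial element $g$ of Thompson's group $F$. Fourth, a surgery lemma inside $F$ (products of conjugates $(\alpha^{g_1})^{l_1}\cdots(\alpha^{g_n})^{l_n}$ moving $x$ past $y$, with an $\epsilon$-advancing guarantee for \emph{any} homeomorphism supported in $I$, not just $\alpha$) assembles these local agreements into the global displacing element, with a final product of conjugates by the translations $t\mapsto t+k$, $0\le k\le N$, handling the intervals where the element is only $\epsilon$-advancing rather than exactly periodic. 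The incommensurability of $1$ and $\lambda$ enters through the repetitivity property, not through producing irrational shifts in $N$. Without this (or some substitute) your argument establishes only the easy implication: \emph{if} a displacement lemma holds, then $G_{\lambda}$ is simple.
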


This provides a family of finitely generated simple subgroups of homeomorphisms of the real line which are very elementary to define.
Moreover, they are shown to admit minimal actions on the torus by homeomorphisms.

\begin{cor}\label{maincortorus}
There exist finitely generated simple groups of homeomorphisms of the real line that admit a minimal action by homeomorphisms on the torus.
\end{cor}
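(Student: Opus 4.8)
The plan is to realize the torus as $\mathbf{T}^2=\mathbf{S}^1\times(\mathbf{R}/\lambda\mathbf{Z})$, with $\mathbf{S}^1=\mathbf{R}/\mathbf{Z}$, and to build the $G_\lambda$-action on it by extending the given action on $\mathbf{R}$ across the dense embedding
\[
\pi\colon\mathbf{R}\longrightarrow\mathbf{T}^2,\qquad \pi(t)=(t+\mathbf{Z},\ t+\lambda\mathbf{Z}).
\]
Since $\lambda$ is irrational, $1$ and $\lambda$ are rationally independent, so $\pi$ is injective with dense image. The observation that drives everything is that commuting with $t\mapsto t+1$ forces every $g\in\overline{T}$ to have the form $g(t)=t+\psi(t)$ with $\psi$ continuous and $1$-periodic, and likewise every $h\in\overline{T}_\lambda$ has the form $h(t)=t+\chi(t)$ with $\chi$ continuous and $\lambda$-periodic. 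Thus each generator is a bounded perturbation of the identity whose displacement descends to a continuous function on one of the two circle factors; this is exactly the structure needed to extend it across $\pi$.

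First I would extend each generator to a homeomorphism of $\mathbf{T}^2$. For $g\in\overline{T}$ with displacement $\psi$, viewed as a continuous function $\mathbf{S}^1\to\mathbf{R}$, set
\[
\hat{g}(x_1,x_2)=\bigl(x_1+\psi(x_1),\ x_2+\psi(x_1)\bigr),
\]
and for $h\in\overline{T}_\lambda$ with displacement $\chi$, viewed on $\mathbf{R}/\lambda\mathbf{Z}$, set $\hat{h}(x_1,x_2)=\bigl(x_1+\chi(x_2),\ x_2+\chi(x_2)\bigr)$. A direct computation shows these are continuous, invertible (with inverses coming from $g^{-1}$ and $h^{-1}$), and satisfy the equivariance $\hat{g}\circ\pi=\pi\circ g$. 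Because $\pi(\mathbf{R})$ is dense and $\mathbf{T}^2$ is Hausdorff, any continuous map satisfying such an equivariance is unique; hence, extending an arbitrary element of $G_\lambda$ by composing the extensions of the letters of a word representing it, and invoking uniqueness to see the result does not depend on the word, yields a well-defined homomorphism $\Phi\colon G_\lambda\to\textup{Homeo}(\mathbf{T}^2)$ with $\Phi(g)\circ\pi=\pi\circ g$ for every $g$.

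It then remains to prove minimality, for which I would use only the two central translations $\tau\colon t\mapsto t+1$ and $\sigma\colon t\mapsto t+\lambda$, which lie in $\overline{T}$ and $\overline{T}_\lambda$ respectively, hence in $G_\lambda$. Evaluating the formulas above at the constant displacements shows that $\Phi(\tau)$ and $\Phi(\sigma)$ act as the rigid rotations $(x_1,x_2)\mapsto(x_1,x_2+1)$ and $(x_1,x_2)\mapsto(x_1+\lambda,x_2)$. Since $\lambda$ is irrational, $\{n\lambda+\mathbf{Z}\}$ is dense in $\mathbf{S}^1$ and $\{m+\lambda\mathbf{Z}\}$ is dense in $\mathbf{R}/\lambda\mathbf{Z}$, and because the first index moves only the first coordinate while the second index moves only the second, the orbit $\{(x_1+n\lambda,\ x_2+m):m,n\in\mathbf{Z}\}$ of any point is dense in $\mathbf{T}^2$. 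Thus already the abelian subgroup $\langle\tau,\sigma\rangle$ acts minimally, so $G_\lambda$ does too. Together with Theorem \ref{main1} (simplicity) and the finite generation of $G_\lambda$ (it is generated by the finitely generated groups $\overline{T}$ and $\overline{T}_\lambda$), this yields the corollary.

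I expect the extension step to be the main obstacle, and it is precisely the periodicity of the displacement functions that makes it work: without it an element of $\overline{T}$ would scramble the second coordinate of $\pi(t)$ uncontrollably and no continuous extension would exist, so the delicate point is recognizing the skew-product (fibered) structure of the generators over the two circle factors. Once that is secured, well-definedness on all of $G_\lambda$ is a soft density argument and minimality is immediate from the translation subgroup, so the remaining work is routine verification.
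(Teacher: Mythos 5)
Your proof is correct, and the action you build is in fact the paper's action written in different coordinates: the paper defines $\eta_1,\eta_2\colon\overline{T}\to\textup{Homeo}^+(\mathbf{R}^2)$ by $(x,y)\cdot\eta_1(g)=(x+\tfrac{1}{\lambda}(y\cdot g-y),\ y\cdot g)$ and $(x,y)\cdot\eta_2(g)=(x\cdot g,\ y+\lambda(x\cdot g-x))$, observes that these commute with the $\mathbf{Z}^2$-action and hence descend to $\mathbf{R}^2/\mathbf{Z}^2$, and notes that they preserve the foliation by lines of slope $\lambda$, acting on each leaf as $G_\lambda$ acts on $\mathbf{R}$; rescaling one circle factor by $\lambda$ identifies these formulas with your $\hat{g},\hat{h}$ on $\mathbf{S}^1\times(\mathbf{R}/\lambda\mathbf{Z})$, and identifies your dense line $\pi(\mathbf{R})$ with the leaf through the origin. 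The genuine difference lies in the verification, which is precisely what the paper leaves as exercises. The paper only produces a homomorphism out of the free product $\overline{T}*\overline{T}$ and asserts its image is $G_\lambda$; identifying that image with $G_\lambda$, and proving minimality (implicitly: leaves are dense, and $G_\lambda$-orbits are dense in each leaf), are not carried out. You instead get a homomorphism defined on $G_\lambda$ itself by extending each element across the dense embedding $\pi$ and invoking the soft fact that continuous maps into a Hausdorff space agreeing on a dense set coincide — this disposes of the free-product bookkeeping in one stroke — and you prove minimality outright from the images of the two central translations, which become independent irrational rotations of the two circle factors, so that their orbits (products of two dense rotation orbits) are already dense. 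So your proposal is a complete argument built on the same construction, and it is cleaner than the paper's sketch at exactly the two points the paper glosses over.
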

%It is surprising to the authors that these groups are simple.

To describe the second family, we recall the notion of a \emph{fast $n$-ring group}.

\begin{defn}\label{nring}
Let $\{J_1,...,J_n\}$ be a set of connected open intervals in $\mathbf{S}^1$ that cover $\mathbf{S}^1$, and homeomorphisms $\{f_1,...,f_n\}$ that satisfy:
\begin{enumerate}
\item $J_i\cap J_j=\emptyset$ if $|i-j|>1(\text{ mod } n)$ and is a nonempty, proper, connected subinterval of both $J_i,J_j$ if $|i-j|=1(\text{ mod }n)$.
%\item Each interval of the form $J_i\setminus (J_{i-1}\cup J_{i+1}) (\text{ mod } n)$ has nonempty interior.
\item $J_i=Supp(f_i)=\{x\in \mathbf{S}^1\mid x\cdot f\neq x\}$ for each $1\leq i\leq n$.
\end{enumerate}
The aforementioned configuration is called an \emph{$n$-ring of intervals and homeomorphisms}.
The group $G_n=\langle f_1,...,f_n\rangle$ is said to be a \emph{fast $n$-ring group} if the following holds.
In what appears below, we interpret the subscripts as modulo $n$.
For each $1\leq i\leq n$, let $x_i$ be the endpoint of $J_{i+1}$ that lies in $J_i$. Then we have the following dynamical condition which we refer to throughout the article as $(*)$: 
$$x_i\cdot f_if_{i+1}...f_{i+l}\in J_{i+l+1}\qquad \forall 1\leq l\leq n$$
%$$x\cdot f_if_{i+1}...f_nf_1...f_{l}\in (inf(J_{l+1}),sup(J_l))\qquad \forall 1\leq l\leq n$$
\end{defn}
It was demonstrated in \cite{BBKMZ} that the isomorphism type of $G_n$ does not depend on the choice of homeomorphisms $f_1,...,f_n$, provided the dynamical condition $(*)$ is satisfied.
Note that an elementary application of the classical ping pong lemma demonstrates that the group $G_2$ is the nonabelian free group of rank $2$.
The isomorphism type of $G_n$ for $n\geq 3$ remains mysterious, however.
Our second family emerges from the derived subgroups of these examples.

\begin{thm}\label{main2}
For each $n\geq 3$, the group $H_n=G_n'$ is finitely generated, simple and left orderable.
In fact, the prescribed action of $H_n$ on $\mathbf{S}^1$ lifts to a faithful action of $H_n$ on $\mathbf{R}$.
\end{thm}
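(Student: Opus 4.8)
The plan is to establish the four assertions in the order: faithful lift to $\mathbf{R}$ (hence left orderability), simplicity, and finite generation, with the last being the genuine obstacle. To produce the lift, I would write $\mathbf{S}^1=\mathbf{R}/\mathbf{Z}$, let $p\colon\mathbf{R}\to\mathbf{S}^1$ be the covering map, and let $T\in\textup{Homeo}^+(\mathbf{R})$ be the unit translation $x\mapsto x+1$. Since each $f_i$ is supported on the proper arc $J_i$ it fixes a point, so it has a canonical lift $\tilde f_i\in\textup{Homeo}^+(\mathbf{R})$ commuting with $T$ and supported on $p^{-1}(J_i)$. Setting $\tilde G_n=\langle \tilde f_1,\dots,\tilde f_n\rangle$ yields a central extension $1\to C\to\tilde G_n\xrightarrow{q}G_n\to 1$ with $C=\tilde G_n\cap\langle T\rangle\le\mathbf{Z}$, and $q$ carries $[\tilde G_n,\tilde G_n]$ onto $H_n$. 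A faithful lift of $H_n$ is then exactly the assertion that $[\tilde G_n,\tilde G_n]\cap\langle T\rangle=\{1\}$: any translation in this intersection lies in the kernel of every homomorphism $\tilde G_n\to\mathbf{R}$, so it suffices to produce one character that is nonzero on $T$. I would build such a character as a flux across a wall between two consecutive lifted intervals (a lifted rotation-number functional); verifying that it is nonzero on $T$ --- equivalently that $T$ has infinite order in $\tilde G_n^{\textup{ab}}$ --- is where the ring structure and condition $(*)$ enter, and a homogeneous quasimorphism such as the translation number will not suffice on its own. Granting this, $q$ restricts to an isomorphism $[\tilde G_n,\tilde G_n]\xrightarrow{\sim}H_n$, so $H_n$ acts faithfully by orientation-preserving homeomorphisms of $\mathbf{R}$ covering its $\mathbf{S}^1$-action, and is therefore left orderable.

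For simplicity I would verify the hypotheses of the classical Epstein--Higman criterion for the action of $G_n$ on the \emph{compact} space $\mathbf{S}^1$, whose conclusion is precisely that $[G_n,G_n]=H_n$ is simple. Two points need checking: that $G_n$ is generated by elements supported on a basis of arcs (immediate from the $f_i$ and their conjugates), and a moving lemma carrying any small arc onto any other and making supports disjoint. This transitivity is exactly what the dynamical condition $(*)$ supplies, since it lets one push the boundary points $x_i$ forward around the ring. The core commutator lemma then runs as usual: given a nontrivial $g$ in a normal subgroup $N\trianglelefteq H_n$, localize to an $h\in H_n$ of small support with $g(\textup{supp}\,h)\cap\textup{supp}\,h=\emptyset$, so that $1\neq[h,g]=h^{-1}h^{g}\in N$ has controlled support; one then bootstraps via the transitivity above to force $N=H_n$. (That $H_n$ is nontrivial and nonabelian is clear, since $G_2$ is free of rank $2$ and the same ping-pong is visible for $n\ge 3$.)

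The hard part, and the step I expect to be the main obstacle, is finite generation of $H_n$. I would first pin down the abelianization; writing $H_n=[G_n,G_n]$ as the kernel of $G_n\to G_n^{\textup{ab}}$, model-independence from \cite{BBKMZ} should give $G_n^{\textup{ab}}\cong\mathbf{Z}^n$, exhibiting $H_n$ as an infinite-index normal subgroup presented only as the kernel of the full abelianization. To manufacture a finite generating set I would fix a family of commutators of the form $[f_i^{\,a},f_{i+1}^{\,b}]$ with $a,b$ in a bounded range and argue, by induction on a notion of ``size'' of support measured around the ring, that condition $(*)$ forces every element of $H_n$ to reduce to a word in these finitely many elements. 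The entire difficulty is in this reduction: the self-similar nesting generated by $(*)$ produces, a priori, infinitely many distinct commutator supports, and the bookkeeping required to absorb all of them into finitely many generators --- while remaining inside $H_n$ rather than drifting into $G_n$ --- is where an intricate analysis of the dynamics seems unavoidable.
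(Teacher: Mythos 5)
Your overall architecture (a lift for orderability, a commutator trick plus transitivity for simplicity, an explicit finite set plus a dynamical induction for finite generation) parallels the paper's, but at the two points where the theorem has actual content your proposal either restates the problem or openly concedes it, so there are genuine gaps. Take the lift first. You reduce faithfulness of the lift of $H_n$ to producing a homomorphism $\phi\colon\langle \widetilde{G_n},T\rangle\to\mathbf{R}$ with $\phi(T)\neq 0$. But $T$ is central, so $\langle\widetilde{G_n},T\rangle'=\widetilde{G_n}'$, and hence such a character exists \emph{if and only if} $T^k\notin\widetilde{G_n}'$ for all $k\neq 0$ --- which is verbatim the statement you set out to prove. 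The character is a reformulation, not a method; moreover the ``flux across a wall'' functional is a priori only a quasimorphism (the $J_i$ cover $\mathbf{S}^1$, so no wall is avoided by all generators), and you yourself flag that making it an honest homomorphism nonvanishing on $T$ ``is where condition $(*)$ enters.'' That deferred verification is the entire proof. The paper argues differently and proves more: it shows $\widetilde{G_n}\cap\langle t\mapsto t+n\rangle=\{id\}$, so that all of $G_n$ (not just $H_n$) lifts isomorphically, by a direct combinatorial argument --- track the orbit of $x=\inf(J_1)$ along a word in the lifted generators, delete trivial steps and backtracking, and use $(*)$ to show the reduced orbit sequence can never return $x$ to an endpoint of the ring intervals, contradicting the existence of a word acting as a nontrivial integer translation.

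The second and larger gap is finite generation, which you correctly identify as the main obstacle but do not address: nothing in the proposal shows that the bounded-exponent commutators $[f_i^{\,a},f_{i+1}^{\,b}]$ generate $H_n$, and I see no reason they do. The paper's generating set is genuinely different and its construction is the key idea: using proximality of the $H_n$-action (Lemma \ref{proximal}) one builds elements $\lambda_{i,j}\in G_n$ with $J_i\cdot\lambda_{i,j}\subset L_{i,j}$ for a prescribed family of pairwise disjoint small intervals $L_{i,j}$ disjoint from $J_j$, sets $\nu_{i,j}=\lambda_{i,j}^{-1}f_i\lambda_{i,j}$, and takes the $n^2$ elements $X=\{\nu_{i,j}^{-1}f_i\}$, which are commutators and hence lie in $H_n$; the proof that $\langle X\rangle=H_n$ runs through the ``special element'' transport machinery (Lemmas \ref{special} and \ref{specialelements}), which lets one conjugate a piece with $\mathcal{I}$-small support by the generators $f_l$ while re-choosing the $\nu_{i,k}$ tail so as to remain inside $\langle X\rangle$, and it crucially quotes the already-established simplicity of $H_n$. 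Finally, a smaller but real defect in your simplicity step: Epstein--Higman-type criteria, as usually stated, either impose fragmentation hypotheses unavailable for a finitely generated group, or conclude only that every nontrivial normal subgroup of $G_n$ contains $H_n$; to prove $H_n$ itself is simple you must run the commutator trick with conjugating elements taken from $H_n$, not $G_n$. The paper secures this by first passing to a minimal model of the action (legitimate by the rigidity Theorem \ref{fast} from \cite{BBKMZ}) and then constructing elements $l_i\in H_n$ that agree with $f_i$ on $Supp(f_i)$, so that minimality and proximality hold for $H_n$; your transitivity is obtained only by pushing points with the $f_i$, which do not lie in $H_n$.
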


To provide another dynamical motivation for this second family, we recall the following dynamical trichotomy for groups actions on the real line.
For every action of a finitely generated group $G$ by orientation preserving homeomorphisms of the real line without global fixed points, there are one of three possibilities:
\begin{enumerate}
\item[(i)] There is a $\sigma$-finite measure $\mu$ that is invariant under the action.
\item[(ii)] The action is semiconjugate to a minimal action for which every small enough interval is sent into a sequence of intervals that converge to a point under well chosen group elements, however, this
property does not hold for every bounded interval.
\item[(iii)] The action is globally contracting; more precisely, it is 
semiconjugate to a minimal one for which the contraction property above holds for all bounded intervals.
\end{enumerate}
(For details, we refer the reader to \cite{NavasICM}). 
Note that if a group admits a faithful action of type $(i)$, then it is \emph{indicable}: it admits a homomorphism onto $\mathbf{Z}$.
Therefore, finitely generated simple groups of homeomorphisms of the real line may only admit actions of type $(ii)$ or $(iii)$.
It was shown in \cite{HydeLodhaNavasRivas} that the groups $G_{\rho}$ constructed by the first two authors in \cite{HydeLodha} have the property that
every action on the real line by homeomorphisms without global fixed points is of type $(iii)$.
The same was shown by Matte Bon and Triestino for their examples in \cite{MatteBonTriestino}.
The following is a corollary of Theorem \ref{main2}, which illustrates a striking new phenomenon associated with the groups $H_n, n\geq 3$.

\begin{cor}\label{maincor}
There exist finitely generated simple left orderable groups which admit actions by orientation preserving homeomorphisms on the real line which are of type $(ii)$.
\end{cor}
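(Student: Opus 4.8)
The plan is to take, for any fixed $n\geq 3$, the group $H_n=G_n'$ furnished by Theorem \ref{main2}, which is finitely generated, simple and left orderable, together with the faithful action on $\mathbf{R}$ obtained by lifting its action on $\mathbf{S}^1$. I will show that this particular action is of type (ii); since $H_n$ already has all of the required abstract properties, the corollary follows. The first task is to check that the trichotomy applies, i.e.\ that the lifted action has no global fixed point. For this I would use that the $n$-ring action of $G_n$ on $\mathbf{S}^1$ is minimal: since $H_n\trianglelefteq G_n$, the set $\Fix_{\mathbf{S}^1}(H_n)$ is a closed $G_n$-invariant subset, hence either empty or all of $\mathbf{S}^1$, and it is not all of $\mathbf{S}^1$ because $H_n$ acts nontrivially; so it is empty, and therefore no point of $\mathbf{R}$ is fixed by all of $H_n$. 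Thus exactly one of (i), (ii), (iii) holds, and the strategy is to exclude (i) and (iii).

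Excluding type (i) is immediate from the remarks preceding the corollary: an action of type (i) forces the group to be indicable, whereas the infinite simple group $H_n$ admits no surjection onto $\mathbf{Z}$. The crux is therefore to exclude type (iii), and the key structural feature is that the lifted action commutes with the deck transformation $T\colon x\mapsto x+1$, a fixed-point-free translation. I would first pass to the minimal model: not being of type (i), the action is semiconjugate to a minimal action $\rho_{\min}$ on $\mathbf{R}$ by collapsing the complementary gaps of the unique minimal set $\Lambda$. Because $T$ centralizes $H_n$, it sends $\Lambda$ to another minimal set, so by uniqueness $T(\Lambda)=\Lambda$. Consequently $\Lambda$ meets every fundamental domain $[x,x+1]$ (otherwise, being $T$-invariant, $\Lambda$ would be empty), the collapsing map $h$ can be chosen $T$-equivariant, and the induced homeomorphism $T'$ determined by $h\circ T=T'\circ h$ satisfies $T'(z)>z$ for all $z$; in particular $T'$ is fixed-point-free and commutes with $\rho_{\min}(H_n)$.

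With this in hand, type (iii) is ruled out as follows. Consider the nondegenerate interval $[p,T'p]$. For every $g\in H_n$ one has $\rho_{\min}(g)[p,T'p]=[\rho_{\min}(g)p,\ T'\rho_{\min}(g)p]$, an interval of the form $[q,T'q]$. If some sequence of such images converged to a point $z\in\mathbf{R}$, then $q_k:=\rho_{\min}(g_k)p\to z$ and $T'q_k\to z$, whence $T'z=z$ by continuity, contradicting that $T'$ is fixed-point-free. Thus $[p,T'p]$ cannot be contracted to a point, $\rho_{\min}$ is not globally contracting, and the action is not of type (iii). By the trichotomy it is of type (ii); one may additionally observe that the positive ``local contraction'' clause of (ii) reflects the proximality of the $n$-ring action on $\mathbf{S}^1$, which contracts sufficiently small intervals.

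I expect the main obstacle to be precisely the step of making ``not type (iii)'' rigorous across the semiconjugacy, since the contraction property in the trichotomy is phrased in terms of the minimal model rather than the original action. The resolution is the $T$-invariance of the unique minimal set, which guarantees that the commuting translation descends to a genuinely fixed-point-free homeomorphism $T'$ of the minimal model; once this is in place, the obstruction to global contraction is the short continuity argument above.
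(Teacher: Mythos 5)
Your proposal is correct, and it takes the route the paper intends but never actually writes down: Corollary \ref{maincor} is stated as an immediate consequence of Theorem \ref{main2} and the surrounding discussion, so the paper leaves implicit precisely the three steps you carry out (no global fixed points, exclusion of type (i) via simplicity and indicability, exclusion of type (iii) via the commuting translation). Your verification of the first two steps matches the paper's remarks, and the substantive content you supply is the third step, where you correctly identify the genuine subtlety --- global contraction is a property of the \emph{minimal model}, so the deck translation $T\colon x\mapsto x+1$ must be pushed through the semiconjugacy. Your mechanism (uniqueness of the minimal set $\Lambda$ for a finitely generated, fixed-point-free action with no invariant Radon measure, hence $T(\Lambda)=\Lambda$, hence $T$ descends to a homeomorphism $T'$ commuting with $\rho_{\min}(H_n)$) is the standard machinery from the theory the paper cites, and your closing continuity argument --- if $\rho_{\min}(g_k)[p,T'p]=[q_k,T'q_k]$ were trapped in intervals shrinking to a point $z$, then $T'z=z$ --- is airtight and needs no uniform lower bound on $T'z-z$. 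The one place that deserves an extra line is your bare assertion that $T'(z)>z$ for all $z$: this is not automatic from $T(\Lambda)=\Lambda$ together with $\Lambda$ meeting every fundamental domain, because $T'$ would have a fixed point if a single fiber of the collapsing map (the closure of a gap of $\Lambda$, or two gaps sharing an isolated endpoint) contained an entire interval $[x,x+1]$. This is ruled out by the dichotomy you are already using: since $x_0\in\Lambda$ forces $x_0+\mathbf{Z}\subset\Lambda$, such a configuration forces $\Lambda=x+\mathbf{Z}$, which is discrete and carries an invariant counting (Radon) measure, contradicting your exclusion of type (i); equivalently, $\Lambda$ must be perfect, and then every gap has length strictly less than $1$. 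With that detail inserted, your proof is complete and is, in effect, the missing proof of the corollary.
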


Given a group that admits an action of type $(ii)$ on $\mathbf{R}$, it is easy to see that the action of the group on the orbit of $0$ provides an unbounded homogenous quasimorphism.
As a consequence, we have the following.

\begin{cor}\label{maincor1}
There exist finitely generated simple left orderable groups that admit non-trivial (unbounded) homogeneous quasimorphisms into the reals.
\end{cor}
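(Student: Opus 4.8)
The plan is to derive the statement from Corollary \ref{maincor}, reading the quasimorphism off the type $(ii)$ action. Fix $n \geq 3$ and set $H_n = G_n'$; by Theorem \ref{main2} this group is finitely generated, simple and left orderable, so it suffices to exhibit a single nonzero homogeneous quasimorphism $H_n \to \mathbf{R}$ (a homogeneous quasimorphism satisfies $\tau(g^{k}) = k\,\tau(g)$, so it is unbounded the instant it is nonzero). By Corollary \ref{maincor} together with Theorem \ref{main2}, $H_n$ acts on $\mathbf{R}$ by orientation preserving homeomorphisms as the lift of its action on $\mathbf{S}^1 = \mathbf{R}/\mathbf{Z}$; in particular every element of $H_n$ commutes with the unit translation $T \colon x \mapsto x+1$, so $H_n$ embeds into the universal cover group $\widetilde{\textup{Homeo}}^+(\mathbf{S}^1)$. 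I would take for the candidate quasimorphism the Poincar\'e translation number
\[
\tau(g) \;=\; \lim_{k \to \infty} \frac{g^{k}(0)}{k},
\]
which records the asymptotic displacement of the orbit of $0$ and is the classical homogeneous quasimorphism on $\widetilde{\textup{Homeo}}^+(\mathbf{S}^1)$; its restriction gives a homogeneous quasimorphism $\tau \colon H_n \to \mathbf{R}$. The entire problem then reduces to showing $\tau \not\equiv 0$.

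Establishing $\tau \not\equiv 0$ is the main point. The cleanest route is to read the nonvanishing off the defining dynamics of a fast $n$-ring. Condition $(*)$ forces the cyclic product $w = f_1 f_2 \cdots f_n$ to push every point of $\mathbf{S}^1$ strictly forward around the ring, so that $w$ is fixed-point free and hence $\operatorname{rot}(w) \neq 0$ in $\mathbf{R}/\mathbf{Z}$; consequently the translation number of any lift of $w$ is nonzero. On the other hand each generator $f_i$ fixes the complement of $J_i$ and so admits a lift with vanishing translation number. Therefore $\tau$ cannot be a homomorphism on $G_n$ (a homomorphism would give $\tau(w) = \sum_i \tau(f_i) = 0$), and by the standard fact that a homogeneous quasimorphism vanishes on the commutator subgroup if and only if it is a homomorphism, $\tau$ must restrict to a nonzero quasimorphism on $G_n' = H_n$. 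Conceptually this nonvanishing is exactly the content of the action being of type $(ii)$ rather than type $(i)$: a type $(i)$ action carries an invariant $\sigma$-finite measure, hence is indicable and has vanishing translation number, whereas the genuine asymptotic drift of the orbit of $0$ present in type $(ii)$ is precisely what $\tau$ detects.

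The step I expect to be the main obstacle is the transfer of the nonvanishing down to $H_n = G_n'$: the translation number lives a priori on the central $\mathbf{Z}$-extension of $G_n$ sitting inside $\widetilde{\textup{Homeo}}^+(\mathbf{S}^1)$, and one must check that the faithful lift furnished by Theorem \ref{main2} is compatible with the section of this extension over $G_n'$, so that the nonzero value is genuinely realized by an element of $H_n$ and is not absorbed into the central translations $T^{m}$. Once this bookkeeping is carried out, $\tau$ is a nonzero, hence unbounded, homogeneous quasimorphism $H_n \to \mathbf{R}$; combined with the finite generation, simplicity and left orderability of $H_n$ supplied by Theorem \ref{main2}, this proves the corollary.
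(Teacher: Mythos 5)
Your proposal follows the same conceptual route as the paper, but the paper's entire ``proof'' is the one sentence preceding the corollary: for a type $(ii)$ action, the asymptotic displacement of the orbit of $0$ (i.e.\ the translation number of the lifted action) is an unbounded homogeneous quasimorphism, and the nonvanishing is declared ``easy to see''. Everything you do beyond that sentence --- pinning the quasimorphism down as the Poincar\'e translation number on $\widetilde{\textup{Homeo}}^+(\mathbf{S}^1)$, and then arguing nonvanishing via the product $w=f_1\cdots f_n$, the integer translation numbers of lifts of the $f_i$, and Bavard's identity $D(\tau)=\sup_{a,b}|\tau([a,b])|$ (which is what makes ``vanishes on the derived subgroup $\Rightarrow$ homomorphism'' legitimate) --- is a genuine filling-in of what the paper omits, and it is the right skeleton of a proof.

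Two repairs are needed, one real and one that dissolves. The real one: your claim that $(*)$ makes $w$ fixed-point free on $\mathbf{S}^1$ with $\operatorname{rot}(w)\neq 0$ overreaches. Condition $(*)$ only constrains finitely many orbit points, so if the generators are very fast, a point of $J_n\cap J_1$ can be carried strictly \emph{more} than one full turn forward by $w$, while a point of $J_2\setminus(J_1\cup J_3)$ moves less than a full turn; by the intermediate value theorem $w$ may then fix a point of the circle, so $\operatorname{rot}(w)=0$ is possible. What is true, and is all you need, is the statement in your first clause, read in the cover: writing $\widetilde{f}_i$ for the lift of $f_i$ with fixed points, every $x\in\mathbf{R}$ is moved strictly forward by $\widetilde{w}=\widetilde{f}_1\cdots\widetilde{f}_n$ (each factor moves points weakly forward, and the first factor whose support contains the current position moves it strictly), so the $1$-periodic continuous displacement $x\mapsto x\cdot\widetilde{w}-x$ has a positive minimum and hence $\tau(\widetilde{w})>0$. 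The dissolving one: the ``bookkeeping obstacle'' you leave open at the end is a non-issue and should not be deferred. Whatever lifts $h_i=\widetilde{f}_iT^{m_i}$ generate the group $\widetilde{G_n}$ of Theorem \ref{main2}, we have $\tau(h_i)=m_i\in\mathbf{Z}$ because $f_i$ has fixed points; if $\tau$ vanished on $(\widetilde{G_n})'\cong H_n$, Bavard would make $\tau$ a homomorphism on $\widetilde{G_n}$, giving $\tau(h_1\cdots h_n)=\sum_i m_i$, while centrality of $T$ gives $h_1\cdots h_n=\widetilde{w}\,T^{\sum_i m_i}$ and hence $\tau(h_1\cdots h_n)=\tau(\widetilde{w})+\sum_i m_i$. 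Subtracting forces $\tau(\widetilde{w})=0$, contradicting the previous paragraph, with no compatibility check on the section ever required. With these two adjustments your argument is complete and correct.
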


This also has a nice algebraic consequence.
\begin{cor}\label{maincor2}
There exist finitely generated simple left orderable groups that have infinite commutator width: for each $n\in \mathbf{N}$ there is an element that cannot be expressed as a product of fewer than $n$ commutators.
\end{cor}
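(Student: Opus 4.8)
The plan is to deduce this directly from the existence of an unbounded homogeneous quasimorphism, via the standard defect inequality relating quasimorphisms to commutator length. Fix some $N \geq 3$ and set $H = H_N$; by Theorem \ref{main2} this is a finitely generated simple left orderable group, and by Corollary \ref{maincor1} it carries an unbounded homogeneous quasimorphism $\phi \colon H \to \mathbf{R}$. Being non-abelian and simple, $H$ is perfect, so every element is a product of finitely many commutators and the commutator width is a meaningful invariant; it remains to show that it is infinite.

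First I would recall the basic estimates for a homogeneous quasimorphism $\phi$ of defect $D = \sup_{a,b}|\phi(ab) - \phi(a) - \phi(b)|$. Homogeneity gives $\phi(a^{-1}) = -\phi(a)$ and conjugation invariance $\phi(aba^{-1}) = \phi(b)$; combining these with the defect inequality applied to the factorization $(aba^{-1})b^{-1}$ yields $|\phi([a,b])| \leq D$ for every single commutator. Applying the defect inequality $k-1$ times to a product $c = c_1 \cdots c_k$ of $k$ commutators, and using the previous bound on each factor, gives the key estimate $|\phi(c)| \leq (2k-1)D$.

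Next I would pick an element $g \in H$ with $\alpha := \phi(g) \neq 0$, which exists since $\phi$ is unbounded and in particular nonzero. By homogeneity $\phi(g^m) = m\alpha$ for every $m$. If $g^m$ is a product of $k$ commutators then the key estimate forces $(2k-1)D \geq |\phi(g^m)| = m|\alpha|$, so the commutator length satisfies $\operatorname{cl}(g^m) \geq \tfrac{m|\alpha|}{2D} + \tfrac{1}{2}$, which tends to infinity with $m$. Hence, given any $n \in \mathbf{N}$, choosing $m$ so large that $\tfrac{m|\alpha|}{2D} + \tfrac{1}{2} \geq n$ produces an element $g^m \in H$ that cannot be written as a product of fewer than $n$ commutators, which establishes infinite commutator width.

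There is no genuine obstacle in this final step: all of the substantive work lies in the earlier results — the simplicity and left orderability of $H_N$ (Theorem \ref{main2}) and, above all, the construction of the unbounded homogeneous quasimorphism out of the type $(ii)$ dynamics (Corollary \ref{maincor1}). The present deduction is the classical observation (essentially one direction of Bavard duality) that a nonzero homogeneous quasimorphism obstructs bounded commutator length; the only point requiring a word of care is that simplicity guarantees perfectness, so that commutator width is defined at all.
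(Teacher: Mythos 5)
Your proposal is correct and follows exactly the route the paper intends: the paper deduces Corollary \ref{maincor2} directly from the unbounded homogeneous quasimorphism of Corollary \ref{maincor1}, leaving the standard Bavard-type defect estimate implicit, and your argument simply fills in those classical details (one could add the one-line remark that the defect $D$ is strictly positive, since $D=0$ would make $\phi$ a homomorphism, which must vanish on a perfect group, contradicting unboundedness).
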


Note that the homeomorphisms that generate the group $G_n$ can be realised as elements of Thompson's group $T$.
It follows that the groups $H_n$ are subgroups of $T$. 
It was shown in \cite{GhysSergiescu} that the given action of $T$ on $\mathbf{S}^1$ is conjugate to an action on $\mathbf{S}^1$ by $C^{\infty}$-diffeomorphisms.

%Moreover, we can also realize the generators as piecewise linear homeomorphisms of the circle. 
We conclude the following.

\begin{cor}
There exists a finitely generated simple left orderable group that admits:
\begin{enumerate}
\item a faithful action by $C^{\infty}$-diffeomorphisms of the circle.
\item a faithful action by piecewise linear homeomorphisms of the circle (with finitely many allowable breakpoints for each element).
\item an embedding into Thompson's group $T$.
\end{enumerate}
\end{cor}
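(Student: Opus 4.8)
The plan is to show that a single group from the second family, say $H = H_3 = G_3'$, already satisfies all three requirements, so that no fresh construction is needed beyond assembling facts already in hand. By Theorem \ref{main2} the group $H$ is finitely generated, simple, and left orderable, which secures the three adjectives in the statement; it then remains only to produce the three actions, all of them living on the \emph{same} circle $\mathbf{S}^1$.

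First I would realize the defining data of the $3$-ring inside Thompson's group $T$. Choosing the intervals $J_1,\dots,J_n$ to have dyadic endpoints and the homeomorphisms $f_1,\dots,f_n$ to be elements of $T$ supported on the respective $J_i$, one checks that the dynamical condition $(*)$ can still be arranged. Since $(*)$ only constrains the location of the finitely many orbit points $x_i\cdot f_i f_{i+1}\cdots f_{i+l}$, there is ample freedom to pick the breakpoints and the (power-of-two) slopes of each $f_i$ so that every such point lands in the prescribed interval $J_{i+l+1}$. By the isomorphism-type invariance of $G_n$ established in \cite{BBKMZ}, this choice yields the same abstract group $G_n$, whence $H = G_n' \leq G_n \leq T$, which is exactly the embedding of item (3).

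Next, item (2) is obtained by restricting the standard action of $T$ on $\mathbf{S}^1$ to $H$: every element of $T$ is piecewise linear with finitely many breakpoints, so the same holds for every element of $H$, and the restriction is faithful because the $T$-action is faithful (equivalently, because $H$ is simple and nontrivial, so any nontrivial action is automatically faithful). For item (1), I would invoke the theorem of Ghys and Sergiescu \cite{GhysSergiescu} that the standard $T$-action on $\mathbf{S}^1$ is topologically conjugate to an action by $C^{\infty}$-diffeomorphisms; restricting the conjugated action to $H$ gives a faithful smooth action, faithfulness being preserved under conjugation. Since all three actions are carried by the one group $H$, the corollary follows.

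The hard part here is essentially nonexistent once Theorem \ref{main2} is granted: the only point needing genuine (if routine) verification is that the $n$-ring homeomorphisms can be taken inside $T$ while still satisfying $(*)$, and this is precisely the flexibility afforded by the fact that $(*)$ is a finite list of open dynamical conditions. Everything else is the restriction of two already-known actions on a fixed circle, together with the observation that simplicity forces faithfulness.
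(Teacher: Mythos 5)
Your proposal is correct and takes essentially the same approach as the paper: realize the $n$-ring generators as elements of Thompson's group $T$ (which is possible since the condition $(*)$ is a finite, open dynamical condition and the isomorphism type is stable by \cite{BBKMZ}), so that $H_n = G_n' \leq T$ yields items (2) and (3) by restricting the standard action, and then invoke the Ghys--Sergiescu smoothing \cite{GhysSergiescu} for item (1). The paper's only additional remark is that (1) can alternatively be obtained directly by choosing the $n$-ring generators to be smooth from the outset, bypassing \cite{GhysSergiescu}; this is an optional variant, not a gap in your argument.
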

Note that one may show $(1)$ directly without appealing to \cite{GhysSergiescu}, since we can simply choose the homeomorphisms that generate $G_n$ to be smooth.

\begin{conv}
In this article, all group actions will be right actions.
We will use the notation $[f,g]=fgf^{-1}g^{-1}$ and $f^g=g^{-1} fg$.
For $f\in \textup{Homeo}^+(\mathbf{R})$, we define $Supp(f)=\{x\in \mathbf{R}\mid x\cdot f\neq x\}$.
\end{conv}

\section{The first family}

The goal of this section is to prove Theorem \ref{main1}.
We first state and discuss a few preliminaries.

\subsection{Preliminaries}

Throughout the section we denote by $F$ the standard piecewise linear action of Thompson's group $F\leq \textup{Homeo}^+[0,1]$.
Also, we fix $\lambda\in \mathbf{R}\setminus \mathbf{Q}, \lambda>1$.
Recall that $G_{\lambda}=\langle \overline{T},\overline{T}_{\lambda}\rangle$
where $\overline{T}_{\lambda}<\textup{Homeo}^+(\mathbf{R})$ is the lift of the action of $T$ on $\mathbf{S}^1$ with the identification $\mathbf{R}\to \mathbf{R}/\lambda \mathbf{Z}$. 
We denote by $Z(\overline{T}),Z(\overline{T}_{\lambda})$ as the center of $\overline{T},\overline{T}_{\lambda}$ respectively.
Note that $$Z(\overline{T})=\langle t\to t+n\mid n\in \mathbf{Z}\rangle\qquad Z(\overline{T}_{\lambda})=\langle t\to t+n\lambda\mid n\in \mathbf{Z}\rangle$$

Recall that the pointwise stabilizer of $\mathbf{Z}$ in $\overline{T}$, which we denote by $F_1$, is naturally isomorphic to Thompson's group $F$.
Indeed the restriction of this action of $F_1$ to each interval $[n,n+1]$ for each $n\in \mathbf{Z}$ is conjugate to the standard piecewise linear action of $F$ on $[0,1]$ by the translation $t\to t+n$.
%The analogous observation holds for the pointwise stabilizer of $\frac{1}{2}\mathbf{Z}\setminus \mathbf{Z}$ in $\overline{T}$ (upon shifting the picture by translation by $t\to t+\frac{1}{2}$).
%We denote this group by $F_2$.
It is easy to see that for any element $f\in \overline{T}\setminus Z(\overline{T})$, $\langle\langle f\rangle\rangle=\overline{T}$.
Note that the analogous statement holds for $\overline{T}_{\lambda}$.

%We sketch a proof of the following elementary fact.

%\begin{lem}\label{nclosure}
%For any element $f\in \overline{T}\setminus Z(\overline{T})$, $\langle\langle f\rangle\rangle=\overline{T}$.
%\end{lem}
%\begin{proof}
%Indeed, it is easy to show that $\overline{T}=\langle F_1,F_2\rangle=\langle F_1',F_2'\rangle$.
%Since $F_1',F_2'$ are simple, we deduce the well known fact (an easy exercise) that for any element $f\in \overline{T}\setminus Z(\overline{T})$,
%$$\langle\langle f\rangle \rangle \cap (F_i'\setminus \{id\})\neq \emptyset\qquad 1\leq i\leq 2$$
%Since $F_1',F_2'$ are simple, we conclude that $\langle\langle f\rangle\rangle=\overline{T}$.
%\end{proof}

We observe the following.
\begin{lem}\label{uniformlycont}
Every $g\in G_{\lambda}$ is Lipshitz. In particular, $g$ is uniformly continuous. 
\end{lem}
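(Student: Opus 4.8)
The plan is to observe that the set of Lipschitz homeomorphisms of $\mathbf{R}$ is closed under composition, and that $G_{\lambda}$ is generated by $\overline{T}\cup\overline{T}_{\lambda}$. Since every element of $G_{\lambda}$ is therefore a finite product of elements of $\overline{T}$ and of $\overline{T}_{\lambda}$ (these being groups, their inverses cause no additional difficulty), it suffices to prove the two base cases: that every element of $\overline{T}$ is Lipschitz, and that every element of $\overline{T}_{\lambda}$ is Lipschitz.

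To handle $\overline{T}$, I would exploit the defining equivariance of the lift. Since $g\in\overline{T}$ descends to an element of the action of $T$ on $\mathbf{S}^1=\mathbf{R}/\mathbf{Z}$, it satisfies $(x+1)\cdot g=(x\cdot g)+1$ for all $x\in\mathbf{R}$, so its slope function is periodic of period $1$. On the fundamental domain $[0,1]$ the map $g$ is piecewise linear with finitely many breakpoints, and every slope is of the form $2^{n}$ with $n\in\mathbf{Z}$; hence the slopes take only finitely many values and are bounded by some $2^{N}$. By periodicity this same bound $2^{N}$ holds on all of $\mathbf{R}$, so $g$ is $2^{N}$-Lipschitz. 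The identical argument, with period $\lambda$ replacing period $1$, shows that every element of $\overline{T}_{\lambda}$ is Lipschitz.

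Finally I would combine these observations: given $g\in G_{\lambda}$, write $g$ as a product of elements drawn from $\overline{T}\cup\overline{T}_{\lambda}$ with Lipschitz constants $L_{1},\dots,L_{k}$; then $g$ is Lipschitz with constant $L_{1}\cdots L_{k}$, and in particular uniformly continuous. The only point requiring genuine care --- the main obstacle, such as it is --- is that an orientation preserving piecewise linear homeomorphism of $\mathbf{R}$ could a priori have arbitrarily large slopes, so that Lipschitzness is not automatic. It is precisely the equivariance of the two lifts that removes this obstruction, by forcing the derivative to be periodic and hence bounded by its maximum over a single period.
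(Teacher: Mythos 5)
Your proposal is correct and follows essentially the same route as the paper: show the elements of $\overline{T}$ and $\overline{T}_{\lambda}$ are Lipschitz and then use closure of the Lipschitz property under composition (the paper's proof is exactly this, stated as a two-line sketch). Your write-up simply fills in the details the paper calls ``straightforward'' --- periodicity of the slope function plus finitely many slope values on a fundamental domain --- and sidesteps the subtlety about inverses of Lipschitz homeomorphisms by noting that inverses of generators are themselves generators.
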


\begin{proof}
It is straightforward to see that the elements of $\overline{T},\overline{T}_{\lambda}$ are Lipshitz.
Since this property for homeomorphisms is closed under composition and inverses, we are done.
\end{proof}

%For each $x\in [0,1]$, we denote by $x+\mathbf{Z}$ the set $\{x+n\mid n\in \mathbf{Z}\}$.

\subsection{The proof.}
The key idea in the proof of Theorem \ref{main1} is the following.

\begin{prop}\label{mainprop}
Let $f\in G_{\lambda}\setminus \{id\}$.
For each $c\in \{1,\lambda\}$, there is an element $g\in \langle \langle f \rangle \rangle$ that satisfies the following.
\begin{enumerate}
\item $x\cdot g=x$ for all $x\in c\cdot \mathbf{Z}$.
\item There exists a pair $x,y\in [0,c]$, $x<y$ such that $$(x+c\cdot n)\cdot g>y+c\cdot n\qquad \forall n\in \mathbf{Z}$$
\end{enumerate}
\end{prop}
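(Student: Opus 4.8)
The plan is to reduce both cases to a single periodicity statement and then build the required element inside the normal closure by a commutator argument, with the genuine difficulty postponed to a limiting construction. The cases $c=1$ and $c=\lambda$ are symmetric: the argument for $c=1$ uses the central translation $t\colon x\mapsto x+1$ of $\overline{T}$ and the copy $F_1\cong F$ fixing $\mathbf{Z}$, and for $c=\lambda$ one runs it verbatim with $\overline{T}_\lambda$, its central translation $s\colon x\mapsto x+\lambda$, and the corresponding copy of $F$ fixing $\lambda\mathbf{Z}$. So I would treat $c=1$. Next I would reduce conditions (1)--(2) to producing \emph{a nontrivial} $g\in\langle\langle f\rangle\rangle$ \emph{that commutes with $t$ and fixes $0$}. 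Indeed, every element of $\overline{T}$ is a lift and hence commutes with $t$; if $g$ commutes with $t$ and fixes $0$ then it fixes all of $\mathbf{Z}$ (giving (1)) and satisfies $(x+n)\cdot g=(x\cdot g)+n$. If moreover $g\ne\mathrm{id}$ it must move some point of $(0,1)$, so after replacing $g$ by $g^{-1}$ if necessary it moves some $x\in(0,1)$ strictly to the right; choosing $y\in(x,\,x\cdot g)$ then yields (2) simultaneously for every $n\in\mathbf{Z}$. A nontrivial element of $F_1$ is the model example of such a $g$.

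The engine for putting elements into $\langle\langle f\rangle\rangle$ is the commutator trick. Since $f\ne\mathrm{id}$ there is a point $p$ with $p\cdot f\ne p$, and by continuity a small interval $I\ni p$ with $I\cdot f\cap I=\emptyset$, equivalently $I\cdot f^{-1}\cap I=\emptyset$. For any $a\in G_\lambda$ one has $[f,a]=f\cdot(f^{a^{-1}})^{-1}\in\langle\langle f\rangle\rangle$, and if $\supp(a)\subseteq I$ then $f a f^{-1}=a^{f^{-1}}$ is supported in $I\cdot f^{-1}$, which is disjoint from $I$. Writing $[f,a]=a^{f^{-1}}a^{-1}$ and commuting once more against an element $b$ with $\supp(b)\subseteq I$, the factor $a^{f^{-1}}$ commutes with everything supported in $I$ and cancels, leaving $[[f,a],b]=[a^{-1},b]$, again in $\langle\langle f\rangle\rangle$ since it is normal. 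As $a,b$ range over the subgroup $G_\lambda(I)$ of elements of $G_\lambda$ supported in $I$, these commutators generate $[G_\lambda(I),G_\lambda(I)]$; hence $\langle\langle f\rangle\rangle$ contains the entire commutator subgroup of $G_\lambda(I)$.

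Two things must now be supplied. The first is \emph{local richness}: I must exhibit enough nontrivial elements of $G_\lambda$ supported in the small interval $I$ to make $G_\lambda(I)$ nonabelian, so that $[G_\lambda(I),G_\lambda(I)]\ne\{\mathrm{id}\}$ and the previous step yields a nontrivial compactly supported element of $\langle\langle f\rangle\rangle$. This is exactly where the irrationality of $\lambda$ enters: neither $\overline{T}$ nor $\overline{T}_\lambda$ has any nontrivial compactly supported element, since a nonidentity lift is periodic and therefore acts in every fundamental domain. A compactly supported element must instead be assembled from a $1$-periodic element of $\overline{T}$ and a $\lambda$-periodic element of $\overline{T}_\lambda$ that cancel outside $I$, and it is the density of $\mathbf{Z}+\lambda\mathbf{Z}$ that makes such cancellation, and the resulting flexibility near an arbitrary point $p$, available.

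The harder point, which I expect to be the main obstacle, is \emph{periodization}. Condition (2) must hold for all $n\in\mathbf{Z}$, which forces the target $g$ to act nontrivially in every fundamental domain and hence to be genuinely non-compactly supported; yet every element produced so far is compactly supported, and any finite product of translates $h^{t^k}$ of a localized $h\in\langle\langle f\rangle\rangle$ remains compactly supported and fails (2) for large $|n|$. To manufacture a genuinely $t$-periodic element of $\langle\langle f\rangle\rangle$ I would pass to a limit: arrange translates (or conjugates by an element of $\overline{T}$ whose iterated images of $I$ shrink to a point) so that an infinite product converges, and invoke Lemma~\ref{uniformlycont} — the uniform Lipschitz control on all elements of $G_\lambda$ — both to guarantee that the limit is a well-defined homeomorphism commuting with $t$ and fixing $0$, and to obtain the inequality in (2) uniformly in $n$. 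The delicate step is reconciling such a limit with membership in $\langle\langle f\rangle\rangle$, which by definition consists of \emph{finite} words in conjugates of $f$; I expect this to be resolved either by identifying the limit with an explicit element of the normal closure, or by showing directly that $\langle\langle f\rangle\rangle$ already contains a finite generating configuration for a rightward‑pushing element fixing $\mathbf{Z}$, so that no genuine limit is needed.
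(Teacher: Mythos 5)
Your proposal has two genuine gaps, one of which is fatal and unacknowledged. The decisive one is the ``local richness'' step: your commutator engine outputs elements of the commutator subgroup of $G_\lambda(I)$, the group of elements of $G_\lambda$ supported in a single bounded interval $I$, and you then need $G_\lambda(I)$ to be nonabelian. In fact $G_\lambda(I)=\{id\}$ for every bounded $I$: the compactly supported elements of $G_\lambda$ form a normal subgroup, which cannot be all of $G_\lambda$ (the unit translation, central in $\overline{T}$, lies in $G_\lambda$ and is not compactly supported), and hence is trivial by the very simplicity statement of Theorem \ref{main1}. So no nontrivial compactly supported element exists, the hoped-for ``cancellation'' of a $1$-periodic element of $\overline{T}$ against a $\lambda$-periodic element of $\overline{T}_\lambda$ outside a bounded interval is impossible (density of $\mathbf{Z}+\lambda\mathbf{Z}$ notwithstanding), and your machine can only produce the identity. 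This is exactly why the paper never localizes to a bounded interval: it runs the same commutator trick with elements of $F_1$ (the stabilizer of $\mathbf{Z}$ in $\overline{T}$), whose supports are $\mathbf{Z}$-periodic unions $\bigcup_{n\in\mathbf{Z}}(n+I)$, and arranges disjointness of $X\cdot g$ from the whole periodic union $X$ (Proposition \ref{prop mid}); such elements do exist in $G_\lambda$ and already act in every fundamental domain, which simultaneously dissolves your ``periodization'' problem.

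The second gap is the one you flag yourself: your reduction asks for a nontrivial element of $\langle\langle f\rangle\rangle$ commuting \emph{exactly} with $t\colon x\mapsto x+1$, and your proposed route to it — an infinite product or limit — cannot stay inside the normal closure, which consists of finite words; worse, a limit of elements of $G_\lambda$ has no reason to lie in the countable group $G_\lambda$ at all. The paper's resolution is to abandon exact periodicity entirely: its final element $\zeta_2$ is a finite product of integer-translate conjugates of $\zeta_1=(h^{\nu(g_1)})^{l_1}\cdots(h^{\nu(g_n)})^{l_n}$, and the uniformity over all $n\in\mathbf{Z}$ in condition (2) comes not from commuting with $t$ but from \emph{repetitivity} (Proposition \ref{prop repet}): every element of $G_\lambda$ is almost periodic in the sense that windows of bounded length $N$ always contain an integer-translated unit interval on which the element is $\epsilon$-close to its behaviour on $[0,1]$ — this is where irrationality of $\lambda$ and the Lipschitz bound of Lemma \ref{uniformlycont} enter, via Lemma \ref{lem rept0}. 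Combined with $(I,N)$-regularity (Proposition \ref{prop regular}) and the surgery Lemma \ref{lem surgery}, which makes every factor $\epsilon$-advancing with $\epsilon=\frac{1}{8N}$, one gets that on each $[n,n+1]$ a single factor of $\zeta_2$ pushes $\frac18$ past $\frac34$ while the remaining factors retreat by at most $N\epsilon=\frac18$ in total, which yields (2) for every $n$. Note that $\zeta_2$ satisfies (1) and (2) without commuting with $t$; your reduction demands strictly more than the proposition requires, and that surplus is precisely what no finite word in conjugates of $f$ can be shown to deliver.
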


Using Proposition \ref{mainprop}, we can finish the proof of Theorem \ref{main1} as follows.

\begin{proof}[Proof of Theorem \ref{main1}]
Let $g_1,g_2\in G_{\lambda}\setminus \{id\}$ be elements that satisfy the conclusion of Proposition \ref{mainprop} for $c=1,c=\lambda$, respectively.
We will show that:
\begin{enumerate}
\item $\langle \langle g_1\rangle \rangle_{G_{\lambda}}\cap (\overline{T}\setminus Z(\overline{T}))\neq \emptyset$.
\item  $\langle \langle g_2\rangle \rangle_{G_{\lambda}}\cap (\overline{T}_{\lambda}\setminus Z(\overline{T}_{\lambda}))\neq \emptyset$.
\end{enumerate}
We know that the normal closure of any element in $(\overline{T}\setminus Z(\overline{T}))$ is all of $\overline{T}$. Similarly, the normal closure of any element in $(\overline{T}_{\lambda}\setminus Z(\overline{T}_{\lambda}))$ is all of $\overline{T}_{\lambda}$.
So after showing the above we can conclude the proof.
Indeed, since the proofs for $(1),(2)$ are analogous, we shall just prove $(1)$.

Thanks to Lemma \ref{uniformlycont}, $g_1$ is Lipshitz. Combining this with the fact that there exists a pair $x,y\in [0,1]$, $x<y$ such that $$(x+n)\cdot g_1>y+n\qquad \forall n\in \mathbf{Z}$$
we obtain the following. There is an interval $I\subset (0,1)$ such that $x\in I$ and 
$$X\cdot g_1\cap X=\emptyset\qquad \text{ where } X=(\bigcup_{n\in \mathbf{Z}}(I+n))$$

We can find a nontrivial element $$h=[h_1,h_2]\qquad h,h_1,h_2\in F_1\leq \overline{T}$$ such that 
$$Supp(h_1),Supp(h_2),Supp(h)\subset X$$
It follows that $$[h_1,[h_2, g_1^{-1}]]=[h_1,h_2]=h\in F_1\leq (\overline{T}\setminus Z(\overline{T}))$$
finishing the proof.
\end{proof}

The rest of this section shall be devoted to proving Proposition \ref{mainprop}.
We will prove it for the case $c=1$, the other case is completely analogous.

\begin{defn}
Let $G<\textup{Homeo}^+(\mathbf{R})$ be a given group action.
Given compact intervals $I,J$ such that $|I|=|J|$, we denote by $T_{J,I}:\mathbf{R}\to \mathbf{R}$ the unique translation so that $T_{J,I}(J)=I$. 
%and the unique orientation reversing isometry as $T_{J,I}^{or}:J\to I$.
Given $g,g_1,g_2\in G$ and $I,J$ as above, we define 
$$d_{g}(I,J)=sup\{(|x\cdot g-x\cdot h|\mid x\in I\}\qquad\text{ where } h=T_{I,J}\circ g\circ T_{J,I}$$
and 
$$d_{g_1,g_2}(I)=sup\{(|x\cdot g_1-x\cdot g_2|\mid x\in I\}$$
%We denote $d_{g,g}(I,J)$ as $d_g(I,J)$, and $d_{g_1,g_2}(I,I)$ as $d_{g_1,g_2}(I)$.
%Note that $d_g(I,J)=d_g(J,I)$ and that $d_g(I,J)\leq d_g(I,K)+d_g(J,K)$ for compact intervals $I,J,K$ of the same length.
%Hence the restriction of $d_g$ to the set of compact intervals of a fixed length is a pseudometric.

\end{defn}

\begin{lem}\label{lem uniformcons}
Consider an element $$g_1=u_1v_1...u_nv_n\in G_{\lambda}\qquad u_i\in \overline{T}, v_i\in \overline{T}_{\lambda}\text{ for each }1\leq i\leq n$$ 
For each $\epsilon>0$, there is a $\delta_1>0$ such that for each $\delta\in (-\delta_1,\delta_1)$, the element $$g_2=u_1(f_{\delta}^{-1}v_1f_{\delta})...u_n(f_{\delta}^{-1}v_nf_{\delta})\qquad \text{ where }x\cdot f_{\delta}=x+\delta$$
satisfies that $d_{g_1,g_2}([0,1])<\epsilon$.
\end{lem}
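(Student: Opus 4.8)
The plan is to regard $g_2$ as a perturbation of $g_1$ in which only the $\overline{T}_\lambda$-letters $v_i$ have been conjugated by the small translation $f_\delta$, and then to control the resulting change uniformly on $[0,1]$ by a telescoping estimate resting on the fact (Lemma \ref{uniformlycont}) that every factor is Lipschitz. Morally this is just continuity of composition in the uniform metric, but the Lipschitz bounds let me make everything quantitative and uniform in $x$.

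First I would record the single-letter estimate. Writing the right action explicitly, $x\cdot(f_\delta^{-1}v_if_\delta)=(x-\delta)\cdot v_i+\delta$, so for every $x\in\mathbf{R}$,
$$|x\cdot v_i-x\cdot(f_\delta^{-1}v_if_\delta)|\le|x\cdot v_i-(x-\delta)\cdot v_i|+|\delta|\le(L_i+1)|\delta|,$$
where $L_i$ is the Lipschitz constant of $v_i$. Thus each conjugated letter differs from $v_i$ by at most $(L_i+1)|\delta|$ uniformly. Note that conjugation by the translation $f_\delta$, an isometry, leaves Lipschitz constants unchanged, so I never need $f_\delta$ itself to lie in $G_\lambda$; the conjugated letters $f_\delta^{-1}v_if_\delta$ are Lipschitz with the same constant $L_i$.

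Next comes the telescoping. Write $g_1=w_1\cdots w_{2n}$ with $w_{2i-1}=u_i$ and $w_{2i}=v_i$, and set $w_j'=w_j$ for odd $j$ and $w_j'=f_\delta^{-1}w_jf_\delta$ for even $j$, so that $g_2=w_1'\cdots w_{2n}'$. For $0\le k\le 2n$ define the hybrid element $R_k=w_1'\cdots w_k'\,w_{k+1}\cdots w_{2n}$, so that $R_0=g_1$ and $R_{2n}=g_2$. Fixing $x\in[0,1]$ and putting $y=x\cdot(w_1'\cdots w_k')$, the elements $R_k$ and $R_{k+1}$ agree except in their $(k+1)$-st letter, whence
$$|x\cdot R_k-x\cdot R_{k+1}|\le M_{k+1}\,|y\cdot w_{k+1}-y\cdot w_{k+1}'|,$$
where $M_{k+1}$ is the Lipschitz constant of the common (unperturbed) suffix $w_{k+2}\cdots w_{2n}$, itself a product of Lipschitz maps and hence Lipschitz with constant the product of the individual constants. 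The odd terms contribute nothing, the even terms are bounded by the single-letter estimate, and summing over $k$ via the triangle inequality yields
$$d_{g_1,g_2}([0,1])\le\sum_{i=1}^{n}M_{2i}\,(L_i+1)\,|\delta|=:C|\delta|,$$
where $C$ depends only on $g_1$ through the fixed Lipschitz constants of its letters, and not on $\delta$.

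Finally I would take $\delta_1=\epsilon/(C+1)$ (the statement being trivial when $C=0$): for every $\delta\in(-\delta_1,\delta_1)$ one obtains $d_{g_1,g_2}([0,1])\le C|\delta|<\epsilon$, as required. I do not anticipate a genuine obstacle; the only points demanding care are that all bounds be uniform over $x\in[0,1]$—supplied automatically by the Lipschitz hypothesis—and that the suffix constants $M_{2i}$ be independent of $\delta$, which holds because the suffixes involve only the original letters and because conjugation by $f_\delta$ preserves the constants of the $v_i$.
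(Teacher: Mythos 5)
Your proof is correct and is essentially the paper's argument made explicit: the paper disposes of this lemma with a one-line ``elementary inductive argument on $n$, using continuity,'' and your telescoping through the hybrid words $R_k$ is exactly that induction unrolled, with the continuity control supplied quantitatively by the Lipschitz constants from Lemma \ref{uniformlycont}. The key points you flag---uniformity of the single-letter estimate over all of $\mathbf{R}$ (needed since the intermediate points $y$ leave $[0,1]$) and $\delta$-independence of the suffix constants---are precisely what makes the inductive step go through.
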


\begin{proof}
This follows from an elementary inductive argument on $n$, using continuity.
\end{proof}

%\begin{remark}
%In fact, Lemma \ref{lem uniformcons} can be proved without the assumption of uniform continuity. 
%Indeed, the Lemma can be established by considering the restriction of the maps $u_i,v_i$ to sufficiently large compact intervals.
%These restrictions are automatically uniformly continuous since they are continuous and defined on a compact set. 
%However, the appeal to global uniform continuity provides a succinct argument.
%\end{remark}

The following is a basic dynamical fact about irrational translations,
and we leave the proof for the reader.

\begin{lem}\label{lem rept0}
Fix $\lambda\in \mathbf{R}\setminus \mathbf{Q},\lambda>1$.
For each $\epsilon>0$, there is an $N\in \mathbf{N}$ such that for any interval $I$ such that $|I|>N$, there are $m, k\in \mathbf{Z}$ such that $$[m,m+1]\subset I\qquad |m-k\lambda|<\epsilon$$
\end{lem}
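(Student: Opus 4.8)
The plan is to reduce the statement to the standard fact that an irrational rotation of a circle is minimal, and more precisely that its return times to a fixed open set are \emph{syndetic} (they occur with bounded gaps). I would work on the circle $C=\mathbf{R}/\lambda\mathbf{Z}$ of circumference $\lambda$ with the rotation $R\colon x\mapsto x+1$. Since $\lambda$ is irrational, the rotation number $1/\lambda$ is irrational, so the orbit $\{\,\bar m : m\in\mathbf{Z}\,\}$ of $0$ (where $\bar m$ denotes the image of $m$ in $C$) is dense in $C$. I would first recast the target condition in this language: the existence of $k\in\mathbf{Z}$ with $|m-k\lambda|<\epsilon$ says exactly that the circle-distance from $\bar m$ to $0$ is less than $\epsilon$. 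Writing $S=\{\,m\in\mathbf{Z}:\mathrm{dist}_C(\bar m,0)<\epsilon\,\}$, the lemma reduces to two assertions: that $S$ has bounded gaps, and that a sufficiently long interval must contain a unit interval $[m,m+1]$ with $m\in S$.

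The heart of the argument is the bounded-gap property of $S$. Using density of the orbit together with compactness of $C$, there is a finite $M$ for which the initial segment $\{\bar 0,\bar 1,\dots,\bar M\}$ is $\epsilon$-dense in $C$: cover $C$ by finitely many balls of radius $\epsilon/2$, choose one orbit point in each, and let $M$ be the largest index used. Then for any integer $a$, applying $\epsilon$-density to the point $-\bar a$ yields some $j\in\{0,\dots,M\}$ with $\mathrm{dist}_C(\bar a+\bar j,0)=\mathrm{dist}_C(-\bar a,\bar j)<\epsilon$, since rotation by $\bar a$ is an isometry; as $\bar a+\bar j=\overline{a+j}$ this gives $a+j\in S$. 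Hence every block of $M+1$ consecutive integers meets $S$.

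To finish I would set $N=M+3$ and argue by elementary counting. If $|I|>N$, the integers $m$ with $[m,m+1]\subset I$ form a block of more than $|I|-2>M+1$ consecutive integers, so this block must contain some $m\in S$. Unwinding the definition of $S$ for that $m$ produces an integer $k$ with $|m-k\lambda|<\epsilon$ and $[m,m+1]\subset I$, which is the conclusion.

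The one genuinely non-formal point, and the main obstacle, is upgrading mere density of the orbit to the \emph{uniform} recurrence expressed by the bounded-gap property of $S$; bare density would not by itself control where inside a long interval a good integer $m$ occurs. This is exactly where irrationality of $\lambda$ enters, and the compactness step producing a finite $\epsilon$-dense segment $\{\bar 0,\dots,\bar M\}$ is the mechanism that converts minimality into the uniform bound $M$. Everything after that is routine bookkeeping with floors and ceilings.
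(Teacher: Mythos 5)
The paper offers no argument to compare against here: Lemma \ref{lem rept0} is introduced as ``a basic dynamical fact about irrational translations'' and its proof is explicitly left to the reader. Your proof correctly fills that gap. The reduction to syndeticity of the set $S$ of return times of the rotation $x\mapsto x+1$ on the circle $\mathbf{R}/\lambda\mathbf{Z}$ is exactly the right framing; the isometry argument showing that every block of $M+1$ consecutive integers meets $S$, and the final count with $N=M+3$, are both sound. One small repair is needed in the compactness step: you choose, in each ball of radius $\epsilon/2$, ``one orbit point'' of the two-sided orbit $\{\bar m : m\in\mathbf{Z}\}$ and then let $M$ be ``the largest index used,'' but if some chosen indices are negative, those points need not lie in the initial segment $\{\bar 0,\dots,\bar M\}$. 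What you actually need is density of the \emph{forward} orbit $\{\bar m : m\geq 0\}$, which is equally standard (it is the usual statement of minimality of irrational rotations, or can be derived by pigeonhole: some $\overline{q-p}$ with $0\leq p<q$ lies within $\delta$ of $0$, and its forward multiples step around the circle in increments of size less than $\delta$, so finitely many of them are $\delta$-dense). With that substitution, or simply by citing forward-orbit density directly, your argument is complete and correct.
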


\begin{defn}
An element $g\in \textup{Homeo}^+(\mathbf{R})$ is \emph{repetitive} if for each $\varepsilon>0$ 
there is an $N\in \mathbf{N}$ such that for each interval $I$ such that $|I|>N$, there is a subinterval $[m,m+1]\subset I, m\in \mathbf{Z}$ such that $$d_g([0,1],[m,m+1])<\epsilon$$
We say that a group action $G\leq \textup{Homeo}^+(\mathbf{R})$ is said to be \emph{repetitive}, if every $g\in G$ is repetitive.
\end{defn}

\begin{prop}\label{prop repet} 
$G_{\lambda}$ is repetitive.
\end{prop}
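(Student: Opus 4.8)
The plan is to deduce repetitiveness of every $g \in G_\lambda$ from the two lemmas just established, by exploiting the \emph{periodicity} of the two kinds of generating factors: elements of $\overline{T}$ commute with integer translations (the center $Z(\overline{T})$), and elements of $\overline{T}_\lambda$ commute with translation by $\lambda$ (the center $Z(\overline{T}_\lambda)$). The irrationality of $\lambda$ enters only through the Diophantine approximation supplied by Lemma \ref{lem rept0}, which lets us find unit intervals $[m,m+1]$ with $m$ extremely close to a multiple of $\lambda$.

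First I would fix $g \in G_\lambda$ and write it in the alternating normal form $g = u_1 v_1 \cdots u_n v_n$ with $u_i \in \overline{T}$ and $v_i \in \overline{T}_\lambda$, padding with identity factors if $g$ begins or ends with the ``wrong'' type. Given $\varepsilon > 0$, I would apply Lemma \ref{lem uniformcons} to this word to produce a threshold $\delta_1 > 0$ such that, for every $\delta$ with $|\delta| < \delta_1$, the perturbed element $g_\delta := u_1 (f_\delta^{-1} v_1 f_\delta) \cdots u_n (f_\delta^{-1} v_n f_\delta)$ satisfies $d_{g, g_\delta}([0,1]) < \varepsilon$, where $x \cdot f_\delta = x + \delta$.

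The heart of the argument is the observation that conjugating $g$ by an integer translation produces exactly such a perturbed word. Let $f_m$ denote translation by $m \in \mathbf{Z}$, let $k \in \mathbf{Z}$, and set $\delta = k\lambda - m$. Since $f_m \in Z(\overline{T})$ it commutes with each $u_i$, while writing $f_m = f_{-\delta} \circ f_{k\lambda}$ with $f_{k\lambda} \in Z(\overline{T}_\lambda)$ commuting with each $v_i$ gives $f_m v_i f_m^{-1} = f_\delta^{-1} v_i f_\delta$. Inserting $f_m^{-1} f_m$ between consecutive factors therefore yields
$$f_m g f_m^{-1} = u_1 (f_\delta^{-1} v_1 f_\delta) \cdots u_n (f_\delta^{-1} v_n f_\delta) = g_\delta.$$
By the definition of $d_g$, the homeomorphism $f_m g f_m^{-1}$ is precisely the element $h = T_{[0,1],[m,m+1]} \circ g \circ T_{[m,m+1],[0,1]}$ used to compute $d_g([0,1],[m,m+1])$, so this identifies $d_g([0,1],[m,m+1])$ with $d_{g, g_\delta}([0,1])$.

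To finish, I would invoke Lemma \ref{lem rept0} with its tolerance equal to $\delta_1$, obtaining an $N$ such that every interval $I$ with $|I| > N$ contains a unit interval $[m,m+1]$ together with some $k \in \mathbf{Z}$ satisfying $|m - k\lambda| < \delta_1$. For this interval the residual $\delta = k\lambda - m$ has $|\delta| < \delta_1$, so the chain of equalities above combined with the choice of $\delta_1$ gives $d_g([0,1],[m,m+1]) = d_{g, g_\delta}([0,1]) < \varepsilon$, which is exactly repetitiveness of $g$; as $g$ was arbitrary, $G_\lambda$ is repetitive. The step I expect to be the main (and essentially only) delicate point is the bookkeeping in the conjugation identity: checking that conjugation by $f_m$ leaves each $\overline{T}$-factor fixed while shifting each $\overline{T}_\lambda$-factor by exactly the small residual $\delta = k\lambda - m$, so that the output lands in precisely the normal form required by Lemma \ref{lem uniformcons}. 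Here one must take care with the right-action convention and the direction of the translations, though the conclusion is robust since Lemma \ref{lem uniformcons} applies for both signs of $\delta$.
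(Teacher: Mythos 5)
Your proposal is correct and follows essentially the same route as the paper: write $g$ in the alternating form $u_1v_1\cdots u_nv_n$, get $\delta_1$ from Lemma \ref{lem uniformcons}, get $N$ from Lemma \ref{lem rept0} with tolerance $\delta_1$, and identify $f_m g f_m^{-1}$ with $g_\delta$ for $\delta = k\lambda - m$ via the centrality of integer translations in $\overline{T}$ and of $\lambda$-translations in $\overline{T}_\lambda$. The only difference is that you spell out the conjugation bookkeeping that the paper states without proof (its equation $g\restriction [m,m+1]=f_m^{-1} g_{\delta} f_m$), and your verification of it is accurate.
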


\begin{proof}
Consider a nontrivial element $g=u_1v_1\ldots u_nv_n\in G_{\lambda}$, where $u_i\in \overline{T}$ and $v_i\in \overline{T}_\lambda$. 
We will show that $g$ is repetitive.
Let $\epsilon>0$.
Applying Lemma \ref{lem uniformcons}, there is a $\delta_1>0$ such that for each $\delta\in (-\delta_1,\delta_1)$, the element $$g_{\delta}=u_1(t_{\delta}^{-1}v_1t_{\delta})...u_n(t_{\delta}^{-1}v_nt_{\delta})$$
satisfies that $d_{g,g_{\delta}}([0,1])<\epsilon$. 

Using Lemma \ref{lem rept0} we find an $N\in \mathbf{N}$ such that in every interval $I$ of length at least $N$ there are $m,k\in \mathbb{Z}$ such that $[m,m+1]\subset I$ and $|m-k\lambda|< \delta_1$. 
For such $m,k$ there is a  $\delta\in (-\delta_1,\delta_1)$ such that 
$$g\restriction [m,m+1]=f_m^{-1} g_{\delta} f_m\qquad \text{ where }t\cdot f_m=t+m$$

Combining this with the fact that $d_{g,g_{\delta}}([0,1])<\epsilon$, we obtain 
%$$d_{g_{\delta},g}([0,1],[m,m+1])=0$$ 
$$d_{g}([0,1],[m,m+1])<\epsilon$$

\end{proof}

%Note that the restriction of each $u_i$ to $[0,1]$ agrees with its restriction to $[m,m+1]$ and the restriction of each $v_i$ to $[0,1]$ agrees with its restriction to $[n\lambda,n\lambda+1]$. 
%Recall that $$[u_i, t\mapsto t+1]=id\qquad [v_i,t\to t+\lambda]=id$$ for each $1\leq i\leq n$. 
%By uniform continuity, we can choose $\delta$ small enough so that the restriction  $g$ to $[0,1]$ agrees with its restriction to $[m,m+1]$ up to an $\varepsilon$-error. $\hfill \square$

For the rest of the section, we denote $\langle \langle g\rangle \rangle_{G_{\lambda}}$ as simply $\langle \langle g\rangle \rangle$.
We shall now focus our attention on the pointwise stabilizer of $\mathbf{Z}$ in $G_{\lambda}$.
Recall that the pointwise stabilizer of $\mathbf{Z}$ in $\overline{T}$ is $F_1$, defined in the preliminaries above.

\begin{prop}\label{prop mid}
Let $g\in G_{\lambda}\setminus \{id\}$. There is an open interval $J\subset (0,1)$ whose closure is also contained in $(0,1)$, and a nontrivial element $f\in \langle \langle g\rangle \rangle$ such that:
$$Supp(f)\subset \bigcup_{n\in \mathbf{Z}} (n+J)$$
%and so that there exists $$n\in \mathbf{Z}\qquad x,y\in I\qquad x<y$$ such that $x\cdot f>y$.
\end{prop}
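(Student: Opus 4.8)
My plan is to produce the required $f$ as a commutator of two elements of $F_1\le\overline T$. The point of working in $F_1$ is that every element of $F_1$ commutes with the integer translation and is therefore $1$-periodic, so its support is automatically a $\mathbf Z$-periodic union $\bigcup_n(n+J_0)$; and since $F_1\restriction(0,1)$ is Thompson's group $F$, which is non-abelian and has elements supported in any dyadic subinterval, $F_1$ contains non-commuting pairs supported in any such $\bigcup_n(n+J_0)$ with $\bar J_0\subset(0,1)$. Reducing in this way, it suffices to manufacture one element $g_1\in\langle\langle g\rangle\rangle$ together with a periodic family $X=\bigcup_n(n+J_0)$, $\bar J_0\subset(0,1)$, such that $X\cdot g_1\cap X=\emptyset$: choosing $h_1,h_2\in F_1$ supported in $X$ with $[h_1,h_2]\ne\id$, the computation used at the end of the proof of Theorem \ref{main1} gives $[h_1,[h_2,g_1^{-1}]]=[h_1,h_2]\in\langle\langle g_1\rangle\rangle\subseteq\langle\langle g\rangle\rangle$, a non-trivial element supported in $\bigcup_n(n+J_0)$, and then $J=J_0$, $f=[h_1,h_2]$ work. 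One should not attempt to localise $f$ to a bounded interval: no non-trivial element of $\overline T$ or $\overline T_\lambda$ has bounded support, so a fully $\mathbf Z$-periodic support is the natural and essentially forced target.

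Next I would normalise $g$ and invoke repetitiveness. Since $g\ne\id$ its support is open and non-empty, and after conjugating by an integer translation $\tau\in Z(\overline T)\subset G_\lambda$ (which preserves $\langle\langle g\rangle\rangle$) I may assume $g$ moves a point of $(0,1)$. I would then choose a small open $J_0$ with $\bar J_0\subset(0,1)$ so that $g$ carries $J_0$ off the periodic family, that is $(J_0)\cdot g\cap\bigcup_j(j+J_0)=\emptyset$ --- generically possible by picking a moved point $p\in(0,1)$ with $p\cdot g$ lying in a gap of the family and shrinking $J_0$ around $p$. By Proposition \ref{prop repet} the element $g$ is repetitive, so this local picture recurs: for suitably small $\epsilon$ there is a syndetic set $S\subset\mathbf Z$ of strips with $(m+J_0)\cdot g\cap X=\emptyset$ for every $m\in S$. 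Thus $g$ itself already moves the \emph{partial} union $\bigcup_{m\in S}(m+J_0)$ off $X$.

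The crux --- and the step I expect to be the main obstacle --- is to upgrade this merely \emph{syndetic} control to \emph{full} $\mathbf Z$-periodic control, i.e.\ to produce $g_1\in\langle\langle g\rangle\rangle$ moving \emph{every} strip $n+J_0$ off $X$, since $F_1$ only offers supports that are full $\mathbf Z$-periodic unions. This is exactly where the irrationality of $\lambda$ must be used, as the $\mathbf Z$-translation symmetry of the family $X$ has to be broken by the incommensurate scale $\lambda$. Conjugating by a $\lambda$-translation $\tau_\lambda^{k}\in Z(\overline T_\lambda)$ produces $g^{\tau_\lambda^{k}}\in\langle\langle g\rangle\rangle$ acting on integer strips as $g$ translated by $k\lambda$; choosing $k,m$ with $|m-k\lambda|<\epsilon$ via Lemma \ref{lem rept0}, and using that $\tau_\lambda^{k}\tau^{-m}$ is then a translation by the tiny amount $k\lambda-m$, conjugation by $\tau_\lambda^{k}$ is $\epsilon$-close (in the sense of Lemma \ref{lem uniformcons}) to conjugation by $\tau^{m}$, so this conjugate replicates the good behaviour of $g$ on the translated family $S+m$. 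Letting $m$ range over a bounded set of offsets determined by the syndeticity constant, the translates $S+m$ exhaust $\mathbf Z$; the delicate part is then to assemble these conjugates --- each controlling a different syndetic family and each of unbounded support --- into a single element of the normal closure controlling all strips at once, keeping the uncontrolled parts of the various conjugates from interfering on overlapping strips, and to treat uniformly the different dynamical behaviours of $g$ (in particular translation-like $g$, for which the $\mathbf Z$-periodic structure must be dislodged using the $\overline T_\lambda$ side). The near-coincidence $k\lambda\approx m$ furnished by Lemma \ref{lem rept0} is what keeps these corrections under uniform control.

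Granting such a $g_1$ with $X\cdot g_1\cap X=\emptyset$, the argument closes as in the first paragraph. Picking $h_1,h_2\in F_1$ supported in $X=\bigcup_n(n+J_0)$ with $[h_1,h_2]\ne\id$, the conjugate $(h_2^{-1})^{g_1}$ is supported in $\operatorname{Supp}(h_2)\cdot g_1\subseteq X\cdot g_1$, which is disjoint from $X\supseteq\operatorname{Supp}(h_1)$; hence $(h_2^{-1})^{g_1}$ commutes with $h_1$ and the triple commutator collapses, $[h_1,[h_2,g_1^{-1}]]=[h_1,h_2]$. This element lies in $\langle\langle g_1\rangle\rangle\subseteq\langle\langle g\rangle\rangle$, is non-trivial, and is supported in $\bigcup_n(n+J_0)$, so taking $J=J_0$ finishes the proof.
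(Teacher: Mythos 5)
There is a genuine gap, and it sits exactly where you flag it: your entire strategy funnels through producing $g_1\in\langle\langle g\rangle\rangle$ with $X\cdot g_1\cap X=\emptyset$ for a \emph{full} periodic family $X=\bigcup_{n}(n+J_0)$, and your third paragraph concedes that you do not know how to assemble the conjugates $g^{\tau_\lambda^{k}}$ into such an element. Composing them does not work naively: each conjugate is uncontrolled off its own syndetic family of strips, and can push the strips already handled by the other conjugates back into $X$. So the proposal is a plan whose central step is missing. Worse, within the paper's architecture the statement you are trying to prove as a stepping stone is essentially Proposition \ref{mainprop} itself (an element of the normal closure fixing $\mathbf{Z}$ pointwise and pushing a periodic family off itself); the paper reaches that statement only at the end of the section, via Proposition \ref{prop mid}, then Proposition \ref{prop regular}, then Lemma \ref{lem surgery}. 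Your route inverts this ladder and attacks its hardest rung first, with none of the intermediate tools available. (A smaller issue: assuming only that ``$g$ moves a point of $(0,1)$'' does not rule out $g$ acting locally by integer translations --- for instance $g$ could itself be an integer translation --- in which case no choice of $p$ and $J_0$ gives $J_0\cdot g\cap\bigcup_j(j+J_0)=\emptyset$; the paper first replaces $g$ by a commutator with an element of $\overline{T}_{\lambda}$ if necessary, and then selects an interval $J$ on which $g$ is not the restriction of an integer translation.)

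The reason the paper's proof is short is precisely that it avoids the upgrade you declare to be ``essentially forced.'' It never arranges $X\cdot g\cap X=\emptyset$ on all strips; it arranges this only for the single base strip, $(I\cdot g)\cap\bigcup_n(n+I)=\emptyset$, and pairs that with two cheap quantitative facts: $g$ is Lipschitz (Lemma \ref{uniformlycont}), so if $|I|<\delta$ then every image $(n_1+I)\cdot g$ has length less than $\epsilon$, and $I$ is chosen $\epsilon$-deep inside $J$. Consequently each strip-image $(n_1+I)\cdot g$ either misses $X$ entirely or is trapped inside a single block $n_2+J$. Now decompose $v=(f_2^{-1})^g$ as the product of its restrictions to the strip-images (this is legitimate because $f_2$ preserves each strip, so $v$ preserves each image): the factors living on images that miss $X$ have support disjoint from $X$, hence commute with $f_1$ and $f_2$ and cancel out of $h=[f_1,[f_2,g^{-1}]]$, while the remaining factors are supported in $\bigcup_n(n+J)$. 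Hence $Supp(h)\subseteq\bigcup_n(n+J)$, even though $h$ need not equal $[f_1,f_2]$ anywhere globally. The point your approach misses is that Proposition \ref{prop mid} asks only for \emph{support control}, not for the commutator to collapse exactly to $[f_1,f_2]$; once that is observed, the syndetic-to-periodic upgrade, with all the difficulties you correctly identify, is unnecessary.
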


\begin{proof}
First we argue that $\langle \langle g\rangle \rangle$ must contain elements that do not lie in $\langle t\to t+n\mid n\in \mathbf{Z}\rangle$.
If $g$ is itself not in this subgroup, then we are done. Otherwise, we can find an element $f\in \overline{T}_{\lambda}$ such that the element $[g,f]\neq id$ and satisfies the required property. 

We assume in the rest of the proof that $g\notin \langle t\to t+n\mid n\in \mathbf{Z}\rangle$. It follows that there is an $m\in \mathbf{Z}$ and an open interval $J$ such that $\overline{J}\subset (m,m+1)$ and $g\restriction J$ is not the restriction of an integer translation. 

We choose a open subinterval $I_1\subset J$ such that for some $\epsilon>0$ it holds that
$$|inf(I_1)-inf(J)|, |sup(J)-sup(I_1)|>\epsilon$$
Since $g$ is Lipshitz, for $\epsilon>0$, there is a $\delta>0$ such that given an interval $I$, if $|I|<\delta$, then
$|I\cdot g|<\epsilon$.
We then choose an open interval $I\subset I_1$ such that $|I|<\delta$ and 
$$(I\cdot g)\cap \bigcup_{n\in \mathbf{Z}} (n+I)=\emptyset$$
From our assumption, we know that for any $n_1,n_2\in \mathbf{Z}$, if $(n_1+I)\cdot g\cap (n_2+I)\neq \emptyset$, then $(n_1+I)\cdot g\subset (n_2+J)$.

Let $f=[f_1,f_2]\in F_1$ be such that $$Supp(f), Supp(f_1),Supp(f_2)\subset \bigcup_{n\in \mathbf{Z}}(n+I)$$
Then the element $h=[f_1,[f_2,g^{-1}]]$ is the required element that satisfies the conclusion of the Proposition.
That is,
$$Supp(h)\subset \bigcup_{n\in \mathbf{Z}} (n+J)$$
\end{proof}

We define a map $\nu:F\to \overline{T}$ as the obvious extension of the natural map $\nu:F\to F_1\leq \overline{T}$.
For an open interval $I\subset (0,1)$ and $N\in \mathbf{N}$, an element $f\in G_{\lambda}$ is said to be \emph{$(I,N)$-stable}, if the following holds.
\begin{enumerate}

\item $Supp(f)\subset \bigcup_{n\in \mathbf{Z}}(I+n)$.

\item There is an element $g\in F\setminus \{id\}$ such that for each each interval $I,|I|>N$, there is an interval $[m,m+1]\subset I$ for $m\in \mathbf{Z}$,
such that $\nu(g)\restriction [m,m+1]=f\restriction [m,m+1]$.

\end{enumerate}

Note that the element $f$ that emerges in the conclusion of Proposition \ref{prop mid} satisfies the first of the two conditions above.
We show the following.

\begin{prop}\label{prop regular}
Let $I\subset (0,1)$ be an open interval whose closure is also contained in $(0,1)$, and let $f\in G_{\lambda}\setminus \{id\}$ such that:
$$Supp(f)\subset \bigcup_{n\in \mathbf{Z}} (n+I)$$
Then there is a nontrivial element $h\in \langle \langle f\rangle \rangle$ and an $N\in \mathbf{N}$ such that $h$ is $(I,N)$-regular.
\end{prop}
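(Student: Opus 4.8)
The plan is to reuse the commutator trick of Proposition~\ref{prop mid}, but to apply it along a \emph{repetitive} family of unit intervals, so that the element it produces agrees \emph{exactly} with a fixed piecewise linear element $\nu(g)$ on a unit subinterval of every sufficiently long window. The exactness will come not from any exactness of $f$, but from the algebraic collapse of a double commutator once a robust (open) disjointness condition holds.

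First I would normalise $f$ on the reference interval. Since $f\neq id$ it is nontrivial on some $[k,k+1]$; conjugating by the integer translation $t\mapsto t+k$, which lies in $Z(\overline{T})\leq G_{\lambda}$ and hence neither changes $\langle\langle f\rangle\rangle$ nor moves the region $\bigcup_{n}(n+I)$, I may assume $\phi:=f\restriction[0,1]\neq id$. I then fix a point $p\in I$ with $p\cdot\phi\neq p$ and a small closed interval $P$ with $p\in P$, $\overline{P}\subset I$ and $\overline{P\cdot\phi}\cap\overline{P}=\emptyset$. Because $\phi$ is supported in $I\subset(0,1)$ and fixes the integers, $P\cdot\phi\subset(0,1)$, so it is automatically disjoint from every nonzero translate $n+P$ as well. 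Let $2\eta>0$ be the distance between $\overline{P}$ and $\overline{P\cdot\phi}$.

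Next I would feed this into repetitiveness. By Proposition~\ref{prop repet}, $f$ is repetitive, so for $\epsilon<\eta$ there is an $N\in\mathbf{N}$ such that every interval of length greater than $N$ contains a unit interval $[m,m+1]$, $m\in\mathbf{Z}$, with $d_{f}([0,1],[m,m+1])<\epsilon$. Writing $\psi_{m}$ for the restriction of $f$ to $[m,m+1]$ transported to $[0,1]$, this says $\psi_{m}$ is uniformly $\epsilon$-close to $\phi$; by the choice of $\eta$ this forces $P\cdot\psi_{m}\cap P=\emptyset$, i.e.\ on each such interval $f$ carries $m+P$ off $\bigcup_{n}(n+P)$. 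Now I pick $f_{1},f_{2}\in F$ supported in $P$ with $g:=[f_{1},f_{2}]\neq id$ (possible since $F$ restricted to $P$ is again a nonabelian copy of $F$), and set
$$h=[\nu(f_{1}),[\nu(f_{2}),f^{-1}]].$$
Membership $h\in\langle\langle f\rangle\rangle$ is purely algebraic, exactly as in Proposition~\ref{prop mid}: $[\nu(f_{2}),f^{-1}]=\bigl(\nu(f_{2})\,f^{-1}\,\nu(f_{2})^{-1}\bigr)\,f$ is the product of a conjugate of $f^{-1}$ with $f$, hence lies in $\langle\langle f\rangle\rangle$, and commuting with $\nu(f_{1})$ keeps us inside the normal closure. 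Since $Supp(h)\subset\bigcup_{n}(n+P)\subset\bigcup_{n}(n+I)$, condition (1) of the definition holds.

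The heart of the matter is the local computation yielding condition (2), and this is the step I expect to require the most care. Because $\nu(f_{1})$, $\nu(f_{2})$ and $f$ all fix $\mathbf{Z}$ pointwise, each preserves every unit interval, so $h\restriction[m,m+1]$ is the corresponding double commutator of the restrictions, computed in the local coordinates of $[m,m+1]$. On a repetitive interval, $[\nu(f_{2}),f^{-1}]$ acts locally as $f_{2}$ composed with a conjugate of $f_{2}^{-1}$ supported in $P\cdot\psi_{m}$; the disjointness $P\cdot\psi_{m}\cap P=\emptyset$ makes $\nu(f_{1})$ commute with that conjugate factor, and the double commutator collapses to $[f_{1},f_{2}]=g$. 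Thus $h\restriction[m,m+1]=\nu(g)\restriction[m,m+1]$ \emph{exactly}, on a unit interval inside every window of length greater than $N$, so $h$ is $(I,N)$-regular and nontrivial. The delicate point is precisely that the merely approximate control $d_{f}([0,1],[m,m+1])<\epsilon$ supplied by repetitiveness must be converted into the \emph{exact} conclusion $h=\nu(g)$ on these intervals; the resolution is that the margin $\eta$ reserved in the first step makes the disjointness $P\cdot\psi_{m}\cap P=\emptyset$ an open condition that the $\epsilon$-approximation satisfies, after which the collapse of the commutator to a piecewise linear element is exact and automatic.
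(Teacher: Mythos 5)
Your proof is correct and follows essentially the same route as the paper's: normalise $f$ by conjugating with an integer translation, choose a small interval $P\subset I$ that $f\restriction[0,1]$ moves off itself with a definite margin $\eta$, use repetitiveness (Proposition \ref{prop repet}) to find in every window of length $>N$ a unit interval on which this disjointness persists, and observe that the double commutator $h=[\nu(f_1),[\nu(f_2),f^{-1}]]$ collapses exactly to $\nu([f_1,f_2])$ on each such interval, which is precisely the paper's argument (with the collapse computation, left implicit in the paper, spelled out). The only (harmless) overstatement is the claim $Supp(h)\subset\bigcup_{n}(n+P)$: on unit intervals where the disjointness fails, the factor $(\nu(f_2)^{-1})^{f}=f^{-1}\nu(f_2)^{-1}f$ is supported in $\bigl(\bigcup_{n}(n+P)\bigr)\cdot f$, which need not lie in $\bigcup_{n}(n+P)$; but all eight factors of $h$ are supported in $\bigcup_{n}(n+I)$, so $Supp(h)\subset\bigcup_{n}(n+I)$, which is all that condition (1) requires and is exactly what the paper asserts.
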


\begin{proof}
Assume without loss of generality that $f\restriction [0,1]$ is nontrivial.
(Otherwise, we may replace $f$ with a conjugate of $f$ by an integer translation and proceed.)
Let $J\subset I\subset (0,1)$ be an open interval such that either $sup(J\cdot f)<inf(J)$ or $sup(J)<inf(J\cdot f)$.
It follows that there is an $\epsilon>0$ such that for any $g\in \textup{Homeo}^+[0,1]$ such that $d_{g,f}([0,1])<\epsilon$, we have that $J\cdot g\cap J=\emptyset$.

Since $f$ is repetitive, there is an $N\in \mathbf{N}$ such that for each interval $I, |I|>N$ there is an interval $[k,k+1]\subset I$ for $k\in \mathbf{Z}$, such that $$d_f([0,1],[k,k+1])<\epsilon$$

Let $g=[g_1,g_2], g_1,g_2\in F$ be non trivial elements such that $$Supp(g),Supp(g_1),Supp(g_2)\subset J$$
Then the element $$h=[\nu(g_1),[\nu(g_2),f^{-1}]]\in \langle \langle f\rangle \rangle$$ has the property that
for each interval $I, |I|>N$ there is an interval $[k,k+1]\subset I$ for $k\in \mathbf{Z}$
such that $$\nu(g)\restriction [k,k+1]=h\restriction [k,k+1]$$
Moreover, $$Supp(h)\subset \bigcup_{k\in \mathbf{Z}}(I+k)$$
This finishes the proof.
\end{proof}

An element $f\in \textup{Homeo}^+(\mathbf{R})$ is said to be \emph{$\epsilon$-advancing} for some $\epsilon>0$, if for each $x\in \mathbf{R}$ we have that $x\cdot f\geq x-\epsilon$.
(The analagous notion is defined for a homeomorphism of a compact interval).

\begin{lem}\label{lem surgery}
Let $\alpha\in \textup{Homeo}^+[0,1]\setminus \{id\}$ such that $Supp(\alpha)\subseteq I$ for an open interval $I\subset (0,1)$ whose closure is also contained in $(0,1)$.
For any $\epsilon>0$ and $x,y\in (0,1), x<y$, there exist elements $g_1,...,g_n\in F, n\in \mathbf{N}$ such that: 
\begin{enumerate}
\item The element $h=(\alpha^{g_1})^{l_1}...(\alpha^{g_n})^{l_n}$ satisfies that $x\cdot h>y$ for some $l_1,...,l_n\in \mathbf{Z}$.
\item For any homeomorphism $\beta\in \textup{Homeo}^+[0,1], Supp(\beta)\subseteq I$ (where $I$ is the same interval as above),
we have that $(\beta^{g_1})^{l_1}...(\beta^{g_n})^{l_n}$ is $\epsilon$-advancing for any $l_1,...,l_n\in \mathbf{Z}$.
\end{enumerate}
\end{lem}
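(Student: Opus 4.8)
The plan is to realise $h$ and its $\beta$-analogue as a product of relocated powers of a single ``bump'' of $\alpha$, arranged into a rightward ladder by elements of $F$. Since $\alpha\neq\mathrm{id}$, I first pick a connected component $B=(u,v)$ of its support; there $\alpha-\mathrm{id}$ has constant sign, so after replacing $\alpha$ by $\alpha^{-1}$ if necessary (equivalently, by allowing the $l_i$ in conclusion (1) to be negative) I may assume $z\cdot\alpha>z$ for all $z\in(u,v)$, whence $z\cdot\alpha^{l}\to v$ as $l\to+\infty$. The two conclusions then become two separate tasks about the short, order-preserving copies $S_i=I\cdot g_i$ (carrying the transported bumps $B\cdot g_i$): for (1) the copies must chain from a neighbourhood of $x$ rightward past $y$, and for (2) their left endpoints must be increasing with each $|S_i|<\epsilon$.

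For the ladder giving (1), I use that $F$ acts on $(0,1)$ flexibly enough to map a fixed dyadic interval containing $\overline{I}$ into any prescribed small dyadic subinterval of $(0,1)$, and can be made arbitrarily contracting. Choosing the target intervals to march rightward from $x$ to beyond $y$ produces $g_1,\dots,g_n\in F$ with $x$ in the pushing region of $B\cdot g_1$, with $\sup(B\cdot g_i)$ in the pushing region of $B\cdot g_{i+1}$, and with $\sup(B\cdot g_n)>y$. Applying the factors in order and using $z\cdot\alpha^{l}\to v$, I pick exponents $l_1,\dots,l_n$ (all of the sign that pushes right, using that an orientation-preserving $g_i$ transports the pushing direction) so that $x$ is carried past each successive threshold and $x\cdot h>y$. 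This is a routine relay argument once the copies overlap correctly.

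The delicate point, and the reason the lemma is stated for arbitrary $\beta$ and arbitrary exponents, is (2): I must show that for every $\beta$ with $\mathrm{Supp}(\beta)\subseteq I$ and every $l_1,\dots,l_n\in\mathbf{Z}$ the product $P=(\beta^{g_1})^{l_1}\cdots(\beta^{g_n})^{l_n}$ is $\epsilon$-advancing. Here the only available facts are that each factor is supported in $S_i=I\cdot g_i$ and preserves the interval $S_i$, that $|S_i|<\epsilon$, and that $\inf S_1\le\inf S_2\le\cdots\le\inf S_n$. The crux is a monotonicity estimate: track a point $z$ through the factors in order and let $i_0$ be the first index whose factor moves it, so $z\in S_{i_0}$ and thus $\inf S_{i_0}\le z\le\sup S_{i_0}<\inf S_{i_0}+\epsilon$. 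I claim the value never drops below $\inf S_{i_0}$ thereafter. Indeed, after step $i_0$ the point lies in $S_{i_0}$, and inductively a later factor $j>i_0$ either fixes the point (so the bound persists) or moves it, in which case the point was in $S_j$ and stays in $S_j$, giving a value $\ge\inf S_j\ge\inf S_{i_0}$. Hence $z\cdot P\ge\inf S_{i_0}>z-\epsilon$, which is precisely the $\epsilon$-advancing condition.

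I expect this monotonicity estimate to be the main obstacle: it is exactly what lets one fixed arrangement of conjugators serve simultaneously for the specific $\alpha$ (driving $x$ past $y$ with chosen exponents) and for every $\beta$ (staying $\epsilon$-advancing), with no control at all on the sign or magnitude of the exponents in the latter case. Everything else is routine bookkeeping: verifying that $F$ really produces short, increasing, overlapping copies of $I$, and checking that conjugation by an orientation-preserving $g_i$ does not reverse the pushing direction of the bump, so that a single sign of exponent suffices throughout the ladder.
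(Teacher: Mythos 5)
Your proposal is correct and follows essentially the same route as the paper's proof: both construct, via the flexibility of $F$ on dyadic intervals, a ladder of short ($<\epsilon$), consecutively overlapping, left-to-right ordered copies $I\cdot g_i$ running from $x$ past $y$, obtain (1) by relaying $x$ rightward through powers of the conjugated bump, and obtain (2) from the shortness together with the increasing arrangement of the copies. Your ``first factor that moves the point'' monotonicity estimate is exactly the justification the paper leaves implicit when it asserts that the length condition guarantees the $\epsilon$-advancing property for arbitrary $\beta$ and arbitrary exponents.
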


\begin{proof}
Recall that the action of $F$ on $(0,1)$ has the following two features, which we shall use.
The first is that for any pair of closed intervals $I_1,I_2\subset (0,1)$ with dyadic rational endpoints,
there is an element $f\in F$ such that $I_1\cdot f=I_2$.
The second is that for any triple of closed intervals $I_1\subset I_2\subset I_3\subset (0,1)$ with dyadic rationals as endpoints,
we can find an $f\in F$ such that $Supp(f)\subset I_3$ and that $I_2\subset I_1\cdot f$.
We use these features to construct elements $g_1,...,g_n\in F$ such that 
$$J_1=I\cdot g_1\qquad ...\qquad J_n=I\cdot g_n$$ are intervals satisfying:
\begin{enumerate}
\item For each $1\leq i\leq n-1$, we have 
$$inf(J_i)<inf(J_{i+1})<sup(J_i)<sup(J_{i+1})\qquad sup(J_i)<inf(J_{i+2})\text{ (for }i<n-1)$$

\item For each $1\leq i\leq n$, $|J_i|<\epsilon$.

\item $x\in J_1, y\in J_n$.

\end{enumerate}

Moreover, we choose $g_1,...,g_n\in F$ that also satisfy that for some $l_1,...,l_n\in \mathbf{Z}$ we have 
$$x\cdot (\alpha^{g_1})^{l_1}...(\alpha^{g_n})^{l_n}>y$$
Note that condition $(2)$ above guarantees that for any $\beta\in \textup{Homeo}^+[0,1]$ such that $Supp(\beta)\subseteq I$,
we have that $(\beta^{g_1})^{l_1}...(\beta^{g_n})^{l_n}$ is $\epsilon$-advancing for any $l_1,...,l_n\in \mathbf{Z}$.
\end{proof}

\begin{proof}[Proof of Proposition \ref{mainprop}]
Let $g\in G_{\lambda}\setminus \{id\}$. By combining Propositions \ref{prop mid} and \ref{prop regular}, we obtain an element $h\in \langle \langle g\rangle \rangle$, an open interval $I\subset (0,1)$ whose closure is also contained in $(0,1)$, and an $N\in \mathbf{N}$ such that $h$ is $(I,N)$-regular.
Replacing $h$ by a conjugate of an integer translation if necessary, we assume that for $[0,N]$, the interval $[0,1]$  
realizes condition $(2)$ of the definition of $(I,N)$-regular.

We apply Lemma \ref{lem surgery} to $\alpha=h\restriction [0,1]$ for $\epsilon=\frac{1}{8N}, x=\frac{1}{8}, y=\frac{3}{4}$ to obtain elements $g_1,...,g_n\in F, n\in \mathbf{N}$ such that: 
\begin{enumerate}
\item The element $\gamma=(\alpha^{g_1})^{l_1}...(\alpha^{g_n})^{l_n}$ satisfies that $\frac{1}{8}\cdot \gamma>\frac{3}{4}$ for some $l_1,...,l_n\in \mathbf{Z}$.
\item For any homeomorphism $\beta\in \textup{Homeo}^+[0,1], Supp(\beta)\subseteq I$ (where $I$ is the same interval as above),
we have that $(\beta^{g_1})^{l_1}...(\beta^{g_n})^{l_n}$ is $\epsilon$-advancing for any $l_1,...,l_n\in \mathbf{Z}$.
\end{enumerate}

Let $$\zeta_1=(h^{\nu(g_1)})^{l_1}...(h^{\nu(g_n)})^{l_n}$$
and $$\zeta_2=\prod_{0\leq k\leq N,k\in \mathbf{Z}} f_k^{-1}\zeta_1f_n\qquad \text{ where }t\cdot f_k=k+1$$
Then $\zeta_2$ is the required element that satisfies the conditions of Proposition \ref{mainprop} for $c=1$, that is:
\begin{enumerate}
\item $x\cdot \zeta_2=x$ for all $x\in \mathbf{Z}$.
\item And $$(\frac{1}{4}+n)\cdot \zeta_2>\frac{1}{2}+n\qquad \forall n\in \mathbf{Z}$$
\end{enumerate}

\end{proof}

\begin{proof}[Proof of Corollary \ref{maincortorus}]
We shall define an action of $G_{\lambda}$ on $\mathbf{R}^2$ that commutes with the natural action of $\mathbf{Z}^2$ on $\mathbf{R}^2$,
and preserves a lamination on $\mathbf{R}^2$.
We define homomorphisms $\eta_1,\eta_2:\overline{T}\to \textup{Homeo}^+(\mathbf{R}^2)$ as follows.
$$(x,y)\cdot \eta_1(g)=(x+\frac{1}{\lambda}(y\cdot g-y), y\cdot g)$$
$$(x,y)\cdot \eta_2(g)=(x\cdot g, y+\lambda(x\cdot g-x))$$
Note that here $x\cdot g, y\cdot g$ refers to the action of $\overline{T}$ on $\mathbf{R}$.
The group generated by $\eta_1(\overline{T}),\eta_2(\overline{T})$ provides a homomorphism $\eta:\overline{T}*\overline{T}\to \textup{Homeo}^+(\mathbf{R}^2)$ whose image is $G_{\lambda}$.
It is an easy exercise to check that this preserves the foliation obtained by lines that have an angle $\theta$ with the $x$-axis,
where $tan(\theta)=\lambda$, and the action on the leaves is conjugate to the original action on $\mathbf{R}$.
\end{proof}

\section{The second family}

Our goal in this section will be to prove Theorem \ref{main2}.

\subsection{Preliminaries}

We recall from the introduction the $n$-ring configuration of intervals $\{J_1,...,J_n\}$ and homeomorphisms $\{f_1,...,f_n\}$ that satisfy the dynamical condition $(*)$. 
Note that if the above is satisfied for some $1\leq i\leq n$, then it is satisfied for all $1\leq i\leq n$. 
As before, we denote the resulting group, called the fast $n$-ring group, as $G_n=\langle f_1,...,f_n\rangle$.
The following was proved in \cite{BBKMZ}.

\begin{thm}\label{fast}
Given an $n$-ring configuration of intervals and homeomorphisms that satisfies condition $(*)$,
the (marked) isomorphism type of the group $G_n$ (with generating set $\{f_1,...,f_n\}$) does not depend on the choice of homeomorphisms $f_1,...,f_n$.
\end{thm}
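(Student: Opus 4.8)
The plan is to reduce the statement to a single combinatorial assertion: the kernel of the natural surjection $\pi\colon F_n\to G_n$ from the free group $F_n=\langle a_1,\ldots,a_n\rangle$ (with $a_i\mapsto f_i$) is \emph{the same subset of $F_n$} for every choice of intervals and homeomorphisms satisfying condition $(*)$. Once this is in hand, given two admissible configurations $(f_i)$ and $(\tilde f_i)$ the surjections $F_n\to G_n$ and $F_n\to\widetilde G_n$ have equal kernels, so $G_n\cong F_n/\ker\pi\cong\widetilde G_n$ via the generator-respecting map $f_i\mapsto\tilde f_i$; this is precisely the claim that the \emph{marked} isomorphism type is independent of the choice.

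First I would record the relations forced on \emph{every} fast $n$-ring group by the support pattern alone: since $J_i\cap J_j=\emptyset$ whenever $|i-j|>1\pmod n$, the generators $f_i$ and $f_j$ commute in that range. Passing to the quotient of $F_n$ by these commutation relations, I would introduce a combinatorial normal form for words — reordering maximal blocks of mutually commuting letters into a canonical order and deleting the forced cancellations — so that every element of $G_n$ is represented by such a form, and the set of normal forms depends only on the ring combinatorics, not on the particular homeomorphisms.

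The heart of the argument is a ping-pong statement: a nontrivial reduced normal-form word acts nontrivially on $\mathbf{S}^1$, and whether it does so is dictated solely by the overlap pattern together with $(*)$. Concretely, I would track the image of a well-chosen test point — one of the endpoints $x_i$ — under the syllables of the word read from the appropriate end, using $(*)$ to guarantee that each consecutive block $f_if_{i+1}\cdots f_{i+l}$ transports $x_i$ decisively out of $J_i$ and into the next interval $J_{i+l+1}$. Fastness is exactly what prevents the orbit of the test point from returning to its starting region, so no nontrivial reduced word can fix the whole circle. Because this transport behavior is encoded entirely in the ring data and in $(*)$ — and not in any quantitative feature of a specific $f_i$ — the set of reduced words acting trivially, hence $\ker\pi$, is identical across all admissible configurations, which closes the reduction.

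The main obstacle is the ping-pong step in the presence of the \emph{cyclic} overlap structure. Unlike a linear chain, where one sets up disjoint attracting and repelling sets and invokes the classical ping-pong lemma verbatim, here the supports wrap around and consecutive intervals genuinely overlap, so the ping-pong regions must be chosen dynamically and the bookkeeping of how the test point crosses from $J_i$ into $J_{i+l+1}$ must be carried out carefully and uniformly across the whole word. Making this tracking depend only on $(*)$, rather than on metric details of the chosen maps, is the crux: it is exactly where condition $(*)$ — not mere nonabelianness — is indispensable, and it is what renders the resulting relations combinatorial and therefore configuration-independent.
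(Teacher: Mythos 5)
First, a point of comparison: the paper does not prove Theorem \ref{fast} at all --- it is quoted from \cite{BBKMZ} --- so your proposal must be measured against the argument given there.

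Your opening reduction is fine: the theorem is precisely the assertion that the kernel of the evaluation map $\mathbb{F}_n\to G_n$, $a_i\mapsto f_i$, is the same subgroup of the free group $\mathbb{F}_n$ for every configuration satisfying $(*)$. The gap is in the step you yourself call the heart of the argument. You claim that any word which remains nontrivial after imposing only the commutation relations $[a_i,a_j]=1$ (for non-adjacent $i,j$) must act nontrivially on $\mathbf{S}^1$; equivalently, that $G_n$ is the right-angled Artin group on the non-adjacency graph of the ring (for $n=3$ there are no non-adjacent pairs, so this would make $G_3$ free of rank $3$). This is false, and no refinement of the test-point bookkeeping can repair it, because the kernel genuinely contains more than the commutations. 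Indeed, condition $(*)$ with $l=1$ forces $x_i\cdot f_if_{i+1}$ into $J_{i+1}\cap J_{i+2}$, which is disjoint from $J_i$; this is exactly the condition of \cite{KKL} (also central in \cite{BBKMZ}) under which a $2$-chain of homeomorphisms generates an isomorphic copy of Thompson's group $F$. So each adjacent pair $\langle f_i,f_{i+1}\rangle$ is a copy of $F$, a two-generator group which is not free: some nontrivial reduced word in the two non-commuting letters $a_i,a_{i+1}$ --- hence a word in your normal form --- acts as the identity on the circle. Consequently the kernel strictly contains the normal closure of the commutators, your normal forms do not biject with elements of $G_n$, and the ping-pong statement you rely on is simply false. (One can also see this abstractly: right-angled Artin groups are residually finite, while $F$ is not, since its commutator subgroup is infinite and simple; yet $F\leq G_n$. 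This is also why the paper can say the isomorphism type of $G_n$, $n\geq 3$, ``remains mysterious'': it is certainly not the group defined by the visible relations.)

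The deeper issue is that your fallback formulation --- ``whether a word acts trivially is dictated solely by the overlap pattern together with $(*)$'' --- is indeed the theorem, but it cannot be proved by exhibiting normal forms that survive the action, because the relations that must be shown to be configuration-independent include the Thompson-type relations above, which are invisible to any commutation calculus. Relatedly, your premise that the linear-chain case follows from the classical ping-pong lemma ``verbatim'' is mistaken for the same reason: fast chain groups are not free either (a fast $2$-chain generates $F$), and the whole point of \cite{BBKMZ} is a generalized ping-pong that accounts for such non-free behavior. Their proof does not attempt to describe $\ker(\mathbb{F}_n\to G_n)$ by words at all; instead, fastness is used to attach to the dynamical diagram a canonical combinatorial model of the action (tracking the finitely many ``feet'' of the generators and their orbits), and one shows that every fast family realizing the diagram acts on these canonical orbits in a way that is equivariantly identified with the model. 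Equality of kernels --- hence the marked isomorphism --- follows from this conjugation-type statement, without ever producing a presentation or a normal form.
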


First, recall the following well known result (Theorem $2.1.1$ in \cite{Navas}).

\begin{thm}\label{minimalisation}
If $G<\textup{Homeo}^+(\mathbf{S}^1)$ then precisely one of the following holds:
\begin{enumerate}
\item There is a finite orbit.
\item All the orbits are dense.
\item There exists a copy of the cantor set $C\subset \mathbf{S}^1$, which is $G$-invariant, and such that $G\restriction C$ is minimal.
In this case, the given action is semiconjugate to a minimal action, i.e. there is a degree one continuous map $\Phi:\mathbf{S}^1\to \mathbf{S}^1$ 
and a group homomorphism $\psi:G\to H$ onto some group $H$ such that
$$\forall f\in G\qquad \Phi\circ f=\psi(f)\circ \Phi$$
The resulting minimal action of $H$ on $\mathbf{S}^1$ is called the minimalisation of the action of $G$.
\end{enumerate}
\end{thm}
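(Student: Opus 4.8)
The plan is to route everything through the notion of a \emph{minimal set}: a nonempty closed $G$-invariant subset of $\mathbf{S}^1$ that is minimal with respect to inclusion. First I would establish existence by Zorn's lemma applied to the family of nonempty closed $G$-invariant subsets ordered by reverse inclusion; compactness of $\mathbf{S}^1$ guarantees that the intersection of a descending chain is again nonempty, closed, and invariant, so a minimal set $K$ exists. The organizing observation for all that follows is that $G$ acts by homeomorphisms preserving $K$, hence permutes the connected components (``gaps'') of $\mathbf{S}^1\setminus K$.

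Next I would classify $K$ topologically. The derived set $K'$ (the non-isolated points of $K$) is closed and $G$-invariant, so minimality forces $K'=\emptyset$ or $K'=K$. In the first case $K$ is discrete and compact, hence finite, placing us in case (1). In the second case $K$ is perfect, and I would then examine the boundary $\partial K$, which is again closed and $G$-invariant; minimality forces $\partial K=\emptyset$ or $\partial K=K$. If $\partial K=\emptyset$ then $K$ is open and closed, so $K=\mathbf{S}^1$ by connectedness, and minimality of $\mathbf{S}^1$ means every orbit is dense, which is case (2). If $\partial K=K$ then $K$ is perfect and nowhere dense, hence a Cantor set, which is case (3).

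To upgrade this to \emph{precisely one}, I would prove that in the absence of a finite orbit the minimal set is unique, using that two minimal sets are either equal or disjoint. Given a Cantor minimal set $K$, the gaps have total length at most $1$, so the orbit of a fixed gap $(a,b)$ must be infinite (otherwise $a$ has finite orbit, excluded) and its lengths tend to $0$ along some sequence $g_j\in G$; then $a\cdot g_j$ and $b\cdot g_j$ accumulate at a common point $z\in K$, and any $x$ lying in that gap satisfies $x\cdot g_j\to z$ as well. Thus $z\in\overline{x\cdot G}$, and since $\overline{z\cdot G}=K$ by minimality, every orbit closure contains $K$; this forces $K$ to be the unique minimal set and makes (2) and (3) mutually exclusive. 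Case (1) is read as the primary alternative: its presence is exactly what can destroy uniqueness, so case (3) is understood to carry this uniqueness. This shrinking-gap step is the main obstacle, and it is where the one-dimensionality of $\mathbf{S}^1$ and the finite total length of the gaps are used essentially.

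Finally, in case (3) I would construct the promised semiconjugacy by collapsing gaps. Identifying the closure of each gap to a point defines a $G$-compatible equivalence relation (compatible because $G$ permutes gaps) whose quotient is again a circle---collapsing countably many disjoint closed arcs in $\mathbf{S}^1$ yields a compact connected $1$-manifold---together with a monotone, continuous, degree-one quotient map $\Phi$. Each $f\in G$ then descends to $\psi(f)\in\textup{Homeo}^+(\mathbf{S}^1)$ with $\Phi\circ f=\psi(f)\circ\Phi$, and minimality of $G\restriction K$ translates directly into minimality of the resulting action of $H=\psi(G)$. The only remaining care is the routine verification that $\Phi$ is well defined, continuous, and degree one and that $\psi$ is a homomorphism, all of which are immediate once the gap structure is in hand.
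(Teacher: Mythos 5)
The paper itself contains no proof of this statement: it is quoted as a well-known result, with a citation to Theorem 2.1.1 of Navas's \emph{Groups of circle diffeomorphisms}, so there is no internal argument to compare yours against. Your proof is correct, and it is in essence the standard argument behind that citation: Zorn's lemma produces a minimal set $K$; minimality forces the dichotomies $K'=\emptyset$ or $K'=K$ and $\partial K=\emptyset$ or $\partial K=K$, yielding the three cases; the shrinking-gap argument (pairwise disjoint gaps have summable lengths, and every gap has infinite orbit when there is no finite orbit) shows the exceptional minimal set lies in every orbit closure and hence is the unique minimal set; and the semiconjugacy is obtained by collapsing gap closures, which form an upper semicontinuous decomposition of $\mathbf{S}^1$ precisely because gap lengths tend to zero, so the quotient is again a circle and the action descends.

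One remark: you were right to hedge on the mutual exclusivity of (1) and (3). As literally written, (3) does not exclude (1): for example, the group of all orientation-preserving homeomorphisms of $\mathbf{S}^1$ fixing a point $p$ and preserving a Cantor set $C$ with $p\notin C$ acts minimally on $C$, yet has a finite orbit, and its action is even semiconjugate to a minimal one. The accurate formulation (and the one in the cited reference) builds uniqueness of the minimal invariant compact set into case (3), equivalently asserts (3) only in the absence of finite orbits; your argument proves exactly that stronger version, which is what makes the trichotomy genuinely exclusive.
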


\begin{remark}
Note that in case $3$, the minimalisation is an action of $H$, but since $H$ is a quotient of $G$ (possibly with trivial kernel),
it may be viewed as an action of $G$. In some cases (for instance if $G$ is the fast $n$-ring group), one can show that $\phi:G\to H$ must be an isomorphism.
\end{remark}

Given a group of homeomorphisms of the circle, we say that the action is \emph{proximal}, if for every interval $I\subset \mathbf{S}^1$ such that $\mathbf{S}^1\setminus I$ has nonempty interior, and any open set $J\subset \mathbf{S}^1$, there is an element $f$ in the group such that $I\cdot f\subset J$.

\subsection{The proof of Theorem \ref{main2}}

Observe that while a given action of $G_n$ may not be minimal, by part $(3)$ of Theorem \ref{minimalisation}, it is semiconjugate to a minimal action on $\mathbf{S}^1$. (Clearly the group action has no finite orbit.)
It is clear that the dynamical condition $(*)$ holds for the new minimal action as well.
Since this dynamical condition guarantees a stable isomorphism type (by Theorem \ref{fast}), it follows that this new minimal action of $G_n$ is also faithful.
Actually, the main Theorem of \cite{BBKMZ} in fact guarantees that we can choose the homeomorphisms $f_1,...,f_n$ satisfying the dynamical condition $(*)$ such that the action of $G_n$ on $\mathbf{S}^1$ is minimal. 
Therefore, for the rest of this section we shall assume that the action of $G_n$ on $\mathbf{S}^1$ is minimal.
We denote as before $H_n=G_n'$ and assume that $n\geq 3$.

\begin{lem}\label{proximal}
The action of $H_n$ on $\mathbf{S}^1$ is proximal. 
\end{lem}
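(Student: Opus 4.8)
The plan is to show that the action of $H_n=G_n'$ on $\mathbf{S}^1$ is proximal by using the $n$-ring structure together with condition $(*)$ to produce, for any source interval $I$ (whose complement has nonempty interior) and any target open set $J$, an element of $H_n$ carrying $I$ into $J$. The essential tool is that $(*)$ lets us contract intervals: by composing the $f_i$ along the ring one can push an endpoint $x_i$ forward through the successive $J_{i+l}$, and iterating such compositions shrinks large intervals onto small ones. The main subtlety, which distinguishes this from the easy case of $G_n$ itself, is that we must stay inside the commutator subgroup $H_n$, so I cannot use the generators $f_i$ directly but must manufacture the contractions out of commutators.

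Let me think about what to prove. First I would record that the action of $G_n$ on $\mathbf{S}^1$ is minimal (which we have assumed) and that $(*)$ makes it \emph{expansive/contractive} in the following precise sense: for every proper closed interval $K\subsetneq\mathbf{S}^1$ and every nonempty open $U$, some element of $G_n$ maps $K$ into $U$. This is essentially the statement that the $G_n$-action is proximal, and it should follow from a ping-pong style argument using condition $(*)$: the products $f_if_{i+1}\cdots f_{i+l}$ drag the marked endpoints around the ring, and by taking high powers and long words one can contract any interval missing at least one point of $\mathbf{S}^1$ into an arbitrarily small set. I would establish this for $G_n$ first as it is the geometric heart of the matter.

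The second step, which I expect to be the \textbf{main obstacle}, is to upgrade proximality from $G_n$ to its commutator subgroup $H_n$. The standard trick here is a commutator correction: if $g\in G_n$ contracts $I$ strongly into a small interval $J_0$, and I can find a second element $g'\in G_n$ that agrees with $g$ on $I$ but whose ``error'' in the abelianization cancels that of $g$, then a suitable product or commutator built from $g$ and $g'$ lies in $H_n$ while still contracting $I$. Concretely, since $\mathbf{S}^1$ has room to spare (the complement of $I$ has interior), I would choose an auxiliary element $h\in G_n$ supported away from $I$ and off the image $I\cdot g$, so that conjugating or forming $[g,h]$-type combinations leaves the action on $I$ essentially unchanged while landing the whole word in $H_n=G_n'$. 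One clean route: show $G_n/H_n$ is abelian (automatic) and finitely generated, pick $g$ with $I\cdot g\subset J$, and correct $g$ by an element supported in a small interval disjoint from both $I$ and $J$ to kill its image in $G_n/H_n$; because the support of the correction misses $I$, the contracting behavior $I\cdot(\text{corrected } g)\subset J$ is preserved. Verifying that such a correcting element exists and is genuinely supported away from $I\cup J$ — using the abundance of elements with small, freely-placeable support guaranteed by the ring structure and minimality — is where the real work lies.

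Finally I would assemble these pieces: given $I$ with $\mathbf{S}^1\setminus I$ having nonempty interior and given open $J$, apply the $G_n$-proximality from step one to get $g\in G_n$ with $I\cdot g\subset J$, then apply the commutator-correction from step two to replace $g$ by an element of $H_n$ with the same (or a slightly shrunken) image still inside $J$, which is exactly the definition of proximality for $H_n$. The one point requiring care throughout is the bookkeeping of which intervals the correcting elements are supported on, so that no correction ever disturbs the incoming interval $I$ or pushes the image out of $J$; I expect to handle this by always choosing the correction's support inside a fixed small interval in $\mathbf{S}^1\setminus(I\cup J)$, whose existence is guaranteed since both $\mathbf{S}^1\setminus I$ and $J$ are open and we may first shrink the target.
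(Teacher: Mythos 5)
Your proposal is correct, and its first half coincides with the paper's: both establish proximality of $G_n$ itself using minimality together with the ring structure and condition $(*)$. Where you diverge is in the passage from $G_n$ to $H_n$. The paper replaces each generator $f_i$ by the commutator $l_i=g_i^{-1}f_i^{-1}g_if_i\in H_n$, where $g_i$ is chosen (by $G_n$-proximality) so that $J_i\cdot g_i\subset J_i^c$; since $(f_i^{-1})^{g_i}$ is supported in $J_i\cdot g_i$, the element $l_i$ agrees with $f_i$ on $Supp(f_i)=J_i$, so the $H_n$-orbits coincide with the $G_n$-orbits. This gives minimality of $H_n$, and the paper then reruns the whole contraction argument with the $l_i$ in place of the $f_i$, placing the extra supports $Supp(l_i)\cap J_i^c$ in harmless prescribed intervals. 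You instead keep the contracting element $g\in G_n$ with $I\cdot g\subset J$ as is and perform a one-shot correction: multiply by an $h$ with $[h]=[g]^{-1}$ in $G_n/H_n$ and with $Supp(h)$ disjoint from a shrunken target $J'\subset J$, so that $gh\in H_n$ and $I\cdot (gh)=I\cdot g\subset J$. Such an $h$ exists as a product of conjugates of the $f_i^{\pm 1}$ pushed into a small interval by $G_n$-proximality, since conjugation does not change the class in the abelianization. Both corrections are instances of the same underlying trick (multiplying by conjugates supported away from the relevant intervals), but yours is more modular: it avoids redoing the contraction argument inside $H_n$ and needs nothing beyond the observation that membership in $H_n$ is detected by the abelianization, at the cost of not producing minimality of $H_n$, which the paper obtains as a byproduct. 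One small simplification: requiring the correction's support to avoid $I$ as well as $J$ is unnecessary, since $h$ is applied after $g$ and so only needs to miss the image $I\cdot g$; dropping that requirement removes the only delicate bookkeeping point in your outline.
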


\begin{proof}
First we shall prove that the action of $G_n$ on $\mathbf{S}^1$ is proximal. Let $I\subset \mathbf{S}^1$ such that $\mathbf{S}^1\setminus I$ has nonempty interior, and consider an open set $J\subset \mathbf{S}^1$.

By minimality, we find an element $g_1\in G_n$ such that $inf(J_1)\cdot g_1\subset J$.
By continuity, there is an open interval $I_1$ containing $inf(J_1)$ such that $I_1\cdot g_1\subset J$.
It is an elementary exercise using the definition of $f_1,...,f_n$, and minimality, to construct an element $g_2\in G_n$ such that $I\cdot g_2\subset Supp(f_1)$. 
There is an $n\in \mathbf{Z}$ such that $I\cdot g_2\cdot f_1^n\subset I_1$. It follows that $I\cdot g_2f_1^ng_1\subset J$, finishing the proof.

First we show that the action of $H_n$ is minimal.
%For each $1\leq i\leq n$, let $K_i$ be an open interval contained in $J_i^c$.
Using proximality of $G_n$ as above, let $g_i\in G_n$ be an element such that $J_i\cdot g_i\subset J_i^c$.
We define the elements $$l_i=g_i^{-1}f_i^{-1}g_if_i$$
Clearly, $l_i\in H_n$ and $l_i\restriction Supp(f_i)=f_i\restriction Supp(f_i)$.
It follows that the orbits of the actions of $H_n, G_n$ are the same, hence the action of $H_n$ is minimal.
We show that the action of $H_n$ is proximal in a similar way as done above for $G_n$, replacing the elements $f_i$ by $l_i$.
(Note that one may construct $l_i$ above so that $Supp(l_i)\cap J_i^c$ lies in any given open interval $J\subset J_i^c$.)
\end{proof}

\begin{prop}
$H_n$ is simple.
\end{prop}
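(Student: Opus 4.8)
The plan is to prove simplicity of $H_n$ by exploiting the proximality and minimality of its action on $\mathbf{S}^1$, which were just established in Lemma \ref{proximal}. The standard route to simplicity for such groups is a commutator/double-commutator trick combined with a ``fragmentation'' or ``local-to-global'' argument: one shows that for any nontrivial $h \in H_n$, its normal closure $\langle\langle h\rangle\rangle$ contains enough elements supported on small intervals to generate the whole group. I would first fix an arbitrary $h \in H_n \setminus \{\id\}$ and aim to show $\langle\langle h\rangle\rangle = H_n$.

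First I would use that $h$ moves some point, so there is a nonempty open interval $I \subset \mathbf{S}^1$ with $I \cdot h \cap I = \emptyset$. Then for any element $f \in H_n$ whose support $\supp(f)$ is contained in $I$, the commutator $[f, h^{-1}] = f^{-1} (h f h^{-1})$ lies in $\langle\langle h\rangle\rangle$, and since $\supp(hfh^{-1}) = \supp(f)\cdot h \subseteq I\cdot h$ is disjoint from $\supp(f)$, this commutator equals $f^{-1}\cdot(hfh^{-1})$ with the two factors having disjoint supports. Passing to a double commutator $[f_1, [f_2, h^{-1}]]$ where $f_1, f_2$ are supported in $I$ recovers $[f_1, f_2] \in \langle\langle h\rangle\rangle$ whenever $\supp(f_2) \cdot h$ is disjoint from $\supp(f_1)$ — this is the same surgery device already used in the proofs of Proposition \ref{prop mid} and Proposition \ref{prop regular}. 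So the normal closure contains a nontrivial element supported in $I$, in fact a commutator of elements supported in $I$.

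Next I would invoke proximality of $H_n$ (Lemma \ref{proximal}) to spread this local element everywhere. Proximality lets us find, for any target open interval $J$, an element $g \in H_n$ with $I \cdot g \subseteq J$; conjugating the element of $\langle\langle h\rangle\rangle$ supported in $I$ by such $g$ produces nontrivial elements of $\langle\langle h\rangle\rangle$ supported in arbitrarily prescribed small intervals. The final step is to show that elements supported on small intervals, together with the proximal dynamics, generate all of $H_n$: given an arbitrary $\psi \in H_n$, one covers $\mathbf{S}^1$ by finitely many small intervals, uses proximality to conjugate the generating local elements so that they ``agree'' with $\psi$ in pieces, and assembles $\psi$ from commutators of such pieces. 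Here I would lean on the fact that $H_n$ is perfect (being the derived subgroup $G_n'$, although one must check $H_n = H_n'$, i.e. that $G_n'$ is itself perfect, which should follow from proximality via the standard fact that proximal groups are uniformly perfect or at least perfect).

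\textbf{The main obstacle} I anticipate is the assembly step: converting ``$\langle\langle h\rangle\rangle$ contains nontrivial elements supported in every small interval'' into ``$\langle\langle h\rangle\rangle$ contains \emph{every} element of $H_n$.'' This requires a clean fragmentation lemma showing that $H_n$ is generated by elements of small support, and a commutator argument (of the kind due to Epstein, Higman, and used pervasively for Thompson-like groups) showing that a group acting proximally and with such a rich supply of locally-supported normal-closure elements must have that normal closure be everything. The delicate point is controlling supports under the proximal contractions so that the disjoint-support commutator trick keeps applying; I expect the proof to hinge on carefully choosing the contracting elements $g$ so that the images $I \cdot g$ tile or refine a cover of $\mathbf{S}^1$, and then running the double-commutator surgery uniformly across the tiles.
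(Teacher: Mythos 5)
Your opening moves coincide with the paper's: extract an interval moved off itself, run the disjoint-support double-commutator trick to place nontrivial locally supported elements in the normal closure, and use proximality (Lemma \ref{proximal}) to transport them into prescribed intervals. The paper does exactly this in its first paragraph (producing $f\in\langle\langle g\rangle\rangle_{H_n}\setminus\{\id\}$ with $\supp(f)^c$ of nonempty interior). But the step you yourself flag as ``the main obstacle'' --- the assembly --- is a genuine gap, and it is precisely where the paper's proof does work of a different kind. Your plan rests on two unproven claims: (i) a fragmentation lemma, i.e.\ that $H_n$ is generated by elements of small support, and (ii) perfectness of $H_n$ (that $G_n''=G_n'$), which you need \emph{before} simplicity, since the Glasner-type argument you gesture at only yields that every nontrivial normal subgroup of $H_n$ contains $H_n'$. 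Neither claim is obvious here: elements of $G_n$ are abstract homeomorphisms with no piecewise structure to cut along, so fragmentation is far from automatic, and ``proximal groups are (uniformly) perfect'' is not a fact you can quote in this generality. Without both, your argument proves only that $\langle\langle h\rangle\rangle$ contains many locally supported elements, not that it is all of $H_n$.

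The paper avoids both claims. Instead of fragmenting an arbitrary element, it uses the purely algebraic fact that $H_n=G_n'$ is generated by the conjugated commutators $[f_i,f_j]^h=[f_i^h,f_j^h]$, $h\in G_n$, and then verifies the single identity
$$[\beta_1,\beta_2]=[\alpha_1,[\alpha_2,f^{-1}]]^{g_1^{-1}},\qquad \alpha_1=\beta_1^{g_1}(\beta_1^{-1})^{g_2},\quad \alpha_2=\beta_2^{g_1}(\beta_2^{-1})^{g_3},$$
where $\beta_1=f_i^h$, $\beta_2=f_j^h$, and $g_1,g_2,g_3\in H_n$ are proximal contractions pushing $\supp(\beta_1)\cup\supp(\beta_2)$ into an interval $I_1$ with $I_1\cdot f\cap I_1=\emptyset$, and $\supp(\beta_1)$, $\supp(\beta_2)$ into intervals $I_2,I_3\subset\supp(f)^c$. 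Note a subtlety your sketch overlooks, which forces this structure: $\beta_1$, $\beta_2$ and their individual conjugates do \emph{not} lie in $H_n$ (the $f_i$ are not in $G_n'$), so they cannot be manipulated inside $\langle\langle f\rangle\rangle_{H_n}$ directly; the elements $\alpha_1,\alpha_2$ are products of two conjugates precisely so that they are visibly products of commutators of $G_n$, hence in $H_n$, while the auxiliary factors are supported in $\supp(f)^c$ and drop out of the computation. To repair your proof you would either have to establish fragmentation and perfectness for $H_n$ (neither is routine), or replace the assembly step by this generating-set-plus-identity argument.
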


\begin{proof}
%We denote the normal closure of an element $\langle \langle g\rangle \rangle_{H_n}$, as $\langle \langle g\rangle \rangle$ in this proof.
To prove simplicity, we must show that $\langle \langle g\rangle \rangle_{H_n}=H_n$ for an arbitrary $g\in H_n\setminus \{id\}$.
First we show that $\langle \langle g\rangle \rangle_{H_n}$ contains a nontrivial element $f$ such that $Supp(f)^c$ has nonempty interior.
Let $J$ be an open interval such that $J\cap (J\cdot g)=\emptyset$ and $(J\cup (J\cdot g))^c$ has nonempty interior.
It is easy to find an element $\gamma\in H_n$ such that $Supp(\gamma)^c$ has nonempty interior.
(Take $[f_1,f_2]$, for instance.)
Using proximality, we find an element $h_1\in H_n$ such that $Supp(\gamma)\cdot h_1\subset J$.
The element $f=[g^{-1}, \gamma^{h_1}]$
has the feature that $f\in \langle \langle g\rangle \rangle_{H_n}\setminus \{id\}$ and that $Supp(f)^c$ has nonempty interior.

Since $\langle \langle f\rangle \rangle_{H_n}\leq \langle \langle g\rangle \rangle_{H_n}$, showing that $\langle \langle f\rangle \rangle_{H_n}=H_n$ finishes the proof. 
It suffices to show that $[f_i,f_j]^h=[f_i^h,f_j^h]\in \langle \langle f\rangle \rangle_{H_n}$ for each $1\leq i,j\leq n$ and $h\in G_n$.

We denote $\beta_1=f_i^h, \beta_2=f_j^h$
and $K_1=Supp(\beta_1), K_2=Supp(\beta_2)$.
It is easy to see that $(K_1\cup K_2)^c$ has nonempty interior since $Supp([f_i,f_j])^c$ has the same feature.

Let $I_1,I_2,I_3$ be disjoint open intervals such that $$I_1\cdot f\cap I_1=\emptyset\qquad I_2\cup I_3\subset Supp(f)^c$$

Using proximality, we find $g_1,g_2,g_3\in H_n$ such that $$(K_1\cup K_2)\cdot g_1\subset I\qquad K_1\cdot g_2\subset I_2\qquad K_2\cdot g_3\subset I_3$$

Let $$\alpha_1=[g_1^{-1}, \beta_1][\beta_1,g_2^{-1}]=\beta_1^{g_1} (\beta_1^{-1})^{g_2}\qquad \alpha_2=[g_1^{-1}, \beta_2][\beta_2,g_3^{-1}]=\beta_2^{g_1} (\beta_2^{-1})^{g_3}$$

Note that $\alpha_1,\alpha_2\in H_n$.

We obtain $$[\beta_1,\beta_2]=[\alpha_1, [\alpha_2,f^{-1}]]^{g_1^{-1}}\in \langle \langle f\rangle \rangle_{H_n}$$
This proves our claim.
\end{proof}

\begin{defn}\label{Li}
We say that an open interval $I\subset \mathbf{S}^1$ is \emph{small} if there exists $1\leq k\leq n$ such that $I\subset J_k$.
We find a collection of pairwise disjoint small intervals $$\mathcal{I}=\{L_{i,j}\subset \mathbf{S}^1\mid 1\leq i,j\leq n\}$$
satisfying that $$L_{i,j}\subset int(\mathbf{S}^1\setminus J_j)\qquad 1\leq i,j\leq n$$
We say that a small open interval $I\subset \mathbf{S}^1$ is \emph{$\mathcal{I}$-small} if for each $1\leq i,j\leq n$ the following holds:
\begin{enumerate}
\item If $I\cap L_{i,j}\neq \emptyset$ then $I\cap J_j=\emptyset$.
\item If $I\cap J_j\neq \emptyset$, then $I\cap L_{i,j}=\emptyset$.
\end{enumerate}
It is an easy exercise to show that for any such $\mathcal{I}$,
there is an $\epsilon>0$ such that any interval $I$ with $|I|<\epsilon$ is $\mathcal{I}$-small.
More generally, we say that a subset of $\mathbf{S}^1$ is $\mathcal{I}$-small, if it is contained in a $\mathcal{I}$-small interval.

Using proximality from Lemma \ref{proximal}, we construct a set of $n^2$ elements $$\{\lambda_{i,j}\mid 1\leq i,j\leq n\}\subset G_n$$ 
satisfying that $$J_i\cdot \lambda_{i,j}\subset L_{i,j}\qquad \forall 1\leq i,j\leq n$$
We define $\nu_{i,j}=\lambda_{i,j}^{-1}f_i\lambda_{i,j}$.
Note that $Supp(\nu_{i,j})\subseteq L_{i,j}$.
Since the intervals $\{L_{i,j}\mid 1\leq i,j\leq n\}$ are pairwise disjoint, the elements of the set $\{\nu_{i,j}\mid 1\leq i,j\leq n\}$
generate a free abelian group of rank $n^2$.

We define the set $$X=\{\nu_{i,j}^{-1} f_i\mid 1\leq i,j\leq n\}\subset H_n$$
Observe that $$\{\nu_{i,j_1}\nu_{i,j_2}^{-1}\mid 1\leq i,j_1,j_2\leq n\}\subset \langle X\rangle $$ 
since $\nu_{i,j_1}\nu_{i,j_2}^{-1}=(\nu_{i,j_2}^{-1} f_i)(\nu_{i,j_1}^{-1}f_i)^{-1}$.
Also, observe that $(f_i\nu_{i,j}^{-1})\in \langle X\rangle$
since
$$(\nu_{i,i}^{-1}\nu_{i,j})(\nu_{i,j}^{-1} f_i)(\nu_{i,i}\nu_{i,j}^{-1})=(\nu_{i,i}^{-1}f_i)(\nu_{i,i}\nu_{i,j}^{-1})=(f_i\nu_{i,i}^{-1})(\nu_{i,i}\nu_{i,j}^{-1})=(f_i\nu_{i,j}^{-1})$$
%Also, observe that by definition, the orbits of the actions of $\langle X\rangle$ and $H_n$ are the same, therefore the action of $\langle X\rangle$
%is also minimal.

An element of the form $f\nu_{i,j}^{-1}\in \langle X\rangle$ is called a \emph{special element} if $Supp(f)\cap Supp(\nu_{i,j}^{-1})=\emptyset$ and $Supp(f)$ is $\mathcal{I}$-small.
\end{defn}

\begin{lem}\label{special}
Let $f\nu_{i,j}^{-1}\in \langle X \rangle$ be a special element and let $g\in \{f_1^{\pm 1},...,f_n^{\pm 1}\}$
be such that $Supp(f^g)$ is $\mathcal{I}$-small.
Then there is a $1\leq k\leq n$ such that $f^g \nu_{i,k}^{-1}$ is also a special element of $\langle X\rangle$.
\end{lem}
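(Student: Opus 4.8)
The plan is to exhibit the index $k$ explicitly and then check the three requirements for $f^g\nu_{i,k}^{-1}$ to be a special element: that $Supp(f^g)$ is $\mathcal{I}$-small (which is exactly the hypothesis), that $Supp(f^g)\cap Supp(\nu_{i,k})=\emptyset$, and that $f^g\nu_{i,k}^{-1}\in\langle X\rangle$. Since $Supp(f^g)=Supp(f)\cdot g$ lies in an $\mathcal{I}$-small interval $I'$, and $I'$ is small, I fix $l_1$ with $I'\subset J_{l_1}$ and take $k=l_1$. Because $I'\cap J_{l_1}\neq\emptyset$, clause $(2)$ of the definition of $\mathcal{I}$-small forces $I'\cap L_{i,l_1}=\emptyset$, and hence $Supp(f^g)\cap Supp(\nu_{i,k})=\emptyset$ since $Supp(\nu_{i,k})\subseteq L_{i,l_1}$. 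So the two support conditions are immediate, and the entire content of the lemma is the membership statement.

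For the membership I would first use the identities $\nu_{i,j}\nu_{i,m}^{-1},\ \nu_{i,m}\nu_{i,k}^{-1}\in\langle X\rangle$ recorded in Definition \ref{Li}, where $m$ is the index with $g=f_m^{\pm 1}$: multiplying $f\nu_{i,j}^{-1}\in\langle X\rangle$ by the first shows $f\nu_{i,m}^{-1}\in\langle X\rangle$, and multiplying the eventual output by the second transports the second index from $m$ to $k$. Thus it suffices to prove $f^g\nu_{i,m}^{-1}\in\langle X\rangle$, starting from the clean element $w=f\nu_{i,m}^{-1}\in\langle X\rangle$. The device is to realize the conjugation $f\mapsto f^g$ by a single generator of $X$: set $\phi=\nu_{m,p}^{-1}f_m\in X$ for a suitable index $p$. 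Then $f^\phi=f_m^{-1}\nu_{m,p}\,f\,\nu_{m,p}^{-1}f_m$, so whenever $\nu_{m,p}$ commutes with $f$, i.e.\ $L_{m,p}\cap Supp(f)=\emptyset$, one gets $f^\phi=f^{f_m}=f^g$ (for $g=f_m^{-1}$ one conjugates by $\phi^{-1}$ and asks instead that $L_{m,p}\cap Supp(f^g)=\emptyset$). Since $\phi\in\langle X\rangle$ we have $w^\phi\in\langle X\rangle$, and $w^\phi=f^\phi(\nu_{i,m}^{-1})^\phi=f^g\nu_{i,m}^{-1}$ provided $\phi$ also commutes with $\nu_{i,m}$.

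It then remains to choose $p$ so that both commutations hold at once. Commuting with $\nu_{i,m}$ requires $(L_{m,p}\cup J_m)\cap L_{i,m}=\emptyset$: the intersection with $J_m$ is empty since $L_{i,m}\subset\operatorname{int}(\mathbf{S}^1\setminus J_m)$, and the intersection with $L_{m,p}$ is empty as soon as $(m,p)\neq(i,m)$, by pairwise disjointness of the $L$'s. Commuting of $\nu_{m,p}$ with the germ is arranged by taking $p=l_0$, the index with $Supp(f)\subset J_{l_0}$, since $\mathcal{I}$-smallness (clause $(2)$) then gives $L_{m,l_0}\cap Supp(f)=\emptyset$. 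When $i\neq m$ these two choices are compatible and the argument closes. The delicate case is the coincidence $i=m$ with $l_0=m$ (so $Supp(f)\subset J_m$ and we conjugate by $f_m$ itself), where $p=l_0$ would violate $(m,p)\neq(i,m)$; I resolve it by arranging in Definition \ref{Li} that $L_{i,j}\subset\operatorname{int}(\mathbf{S}^1\setminus(J_i\cup J_j))$ — legitimate because for $n\geq 3$ no two intervals of the ring cover $\mathbf{S}^1$, so this complement has nonempty interior — which makes $L_{m,p}\cap J_m=\emptyset$ for every $p$, so any $p\neq m$ then satisfies both constraints simultaneously.

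I expect the main obstacle to be precisely this index bookkeeping: the germ of $f$ must be transported to $f^g$ by one generator of $X$ while the accompanying $\nu$-factor is left inert, and the two requirements on $p$ (disjointness of $\nu_{m,p}$ from the germ, and commutation of $\phi$ with $\nu_{i,m}$) compete exactly in the coincidence case above. Everything else — the choice of $k$, the two index-shuffling identities, and the commutator computation giving $f^\phi=f^g$ — is routine once one has disjointness of the $L_{i,j}$ and the $\mathcal{I}$-smallness hypothesis in hand.
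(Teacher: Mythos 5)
Your core argument is correct and is essentially the paper's own proof: you reindex the $\nu$-factor so that its support avoids $J_m$ (the paper passes from $f\nu_{i,j}^{-1}$ to $f\nu_{i,l}^{-1}$ in exactly the same way), then realize the conjugation by a single element of $\langle X\rangle$ of the form (generator $f_m$) times (an inert $\nu$-factor), using clause $(2)$ of $\mathcal{I}$-smallness to force the inert factor to commute with $f$. The only substantive difference is that the paper conjugates by $f_l\nu_{l,l}^{-1}$ and splits into the cases $Supp(f)\cap J_l=\emptyset$ (nothing to do) and $Supp(f)\cap J_l\neq\emptyset$ (where clause $(2)$ yields $Supp(f)\cap L_{l,l}=\emptyset$), whereas your choice $p=l_0$ with $Supp(f)\subset J_{l_0}$ yields $Supp(f)\cap L_{m,l_0}=\emptyset$ unconditionally and so avoids the case split; that is a mild streamlining, not a different route.

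However, your handling of the ``delicate case'' $i=m$, $l_0=m$ contains a genuine error. You propose to strengthen Definition \ref{Li} to require $L_{i,j}\subset int(\mathbf{S}^1\setminus(J_i\cup J_j))$, justified by the claim that no two ring intervals cover $\mathbf{S}^1$ and hence this complement has nonempty interior. The first half is true (adjacent intervals have connected intersection, so their union is not all of $\mathbf{S}^1$; non-adjacent ones are disjoint proper arcs), but the inference fails: the complement can be a single point. For instance, on $\mathbf{R}/\mathbf{Z}$ take $J_1=(0,0.6)$, $J_2=(0.5,1)$, and $J_3$ the open arc from $0.9$ through $0$ to $0.1$; this is a legitimate $3$-ring, yet $\mathbf{S}^1\setminus(J_1\cup J_2)=\{0\}$ has empty interior, so no interval $L_{1,2}$ with your property exists. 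Fortunately the patch is unnecessary: when $(m,p)=(i,m)$ one has $\nu_{m,p}=\nu_{i,m}=\nu_{m,m}$, and $\phi=\nu_{m,m}^{-1}f_m$ commutes with $\nu_{m,m}$ anyway, since $\nu_{m,m}$ commutes with itself and with $f_m$ (here $L_{m,m}\subset int(\mathbf{S}^1\setminus J_m)$ is disjoint from $Supp(f_m)=J_m$). Your analysis implicitly treats disjointness of supports as necessary for commutation, when equality of the elements also suffices. Deleting the patch (and leaving Definition \ref{Li} as the paper states it) turns your proposal into a correct proof.
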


\begin{proof}
Assume that $g=f_l$.
The proof where $g=f_l^{-1}$ is similar.
If $Supp(f)\cap Supp(f_l)=\emptyset$, then $f^g\nu_{i,j}^{-1}=f\nu_{i,j}^{-1}$ and we are done.
If $Supp(f)\cap Supp(f_l)\neq \emptyset$, then we consider the special element $$f\nu_{i,l}^{-1}=(f\nu_{i,j}^{-1})(\nu_{i,j}\nu_{i,l}^{-1})\in \langle X\rangle$$
Note that this is a special element since $Supp(f)$ is $\mathcal{I}$-small, and $Supp(f)\cap Supp(f_l)\neq \emptyset$, hence $Supp(f)\cap Supp(\nu_{i,l}^{-1})=\emptyset$.
It follows that $$(f\nu_{i,l}^{-1})^{f_l\nu_{l,l}^{-1}}=f^{f_l\nu_{l,l}^{-1}}\nu_{i,l}^{-1}=(f^{\nu_{l,l}^{-1}})^{f_l}\nu_{i,l}^{-1}$$
$$=f^{f_l}\nu_{i,l}^{-1}\in \langle X\rangle$$
is a special element.
\end{proof}

\begin{lem}\label{specialelements}
Let $J\subset \mathbf{S}^1$ be an open interval such that $J^c$ has nonempty interior.
For each $1\leq i\leq n, s\in \{\pm 1\}$, there exists an element $\gamma\in \langle X\rangle$ such that $\gamma\restriction J=f_i^{s}\restriction J$.
\end{lem}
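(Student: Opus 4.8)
The plan is to reduce to the case $s=1$, the case $s=-1$ being symmetric with $f_i^{-1}$ in place of $f_i$, and to produce $\gamma$ as a product of elements of $\langle X\rangle$ whose restriction to $J$ telescopes to $f_i$. The starting observation is that each generator $\nu_{i,j}^{-1}f_i\in X$ agrees with $f_i$ on all of $\mathbf{S}^1\setminus L_{i,j}$, since $Supp(\nu_{i,j})\subseteq L_{i,j}$; moreover the consolidation identities $\nu_{i,j_1}\nu_{i,j_2}^{-1}\in\langle X\rangle$ recorded in Definition~\ref{Li} show that the ``defect'' of such an element may be transferred onto $L_{i,k}$ for any single index $k$ of our choosing, since $(\nu_{i,j}\nu_{i,k}^{-1})(\nu_{i,j}^{-1}f_i)=\nu_{i,k}^{-1}f_i$. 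Thus, whenever $J$ happens to miss some $L_{i,k}$, the element $\gamma=\nu_{i,k}^{-1}f_i$ already satisfies the conclusion, and the whole difficulty is concentrated in the case where $J$ is so large that it meets every one of $L_{i,1},\dots,L_{i,n}$.

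To treat a general $J$, I would fragment $f_i\restriction\overline{J}$ into a finite composition $f_i=\theta_M\cdots\theta_1$ in which each $\theta_t$ is supported in an $\mathcal I$-small interval $K_t$ and displaces points only within an $\mathcal I$-small range, with the $K_t$ arranged along the forward $f_i$-dynamics so as to cover $J$; such a fragmentation exists because near the endpoints of $J_i$ the map $f_i$ moves points a small amount, while away from them one can track the orbit of a small interval. Each $\theta_t$ would then be realized as the $f$-part of a special element $e_t=\theta_t\,\nu_{i,j_t}^{-1}\in\langle X\rangle$: one takes a seed special element localizing a small piece of $f_i$ and transports it to $K_t$ by repeatedly invoking Lemma~\ref{special}, whose hypothesis is precisely that the transported support remain $\mathcal I$-small at each stage. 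Multiplying the $e_t$ in the correct order then yields an element of $\langle X\rangle$ whose $f$-parts reproduce $f_i$ on $J$.

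The remaining task, and the step I expect to be the main obstacle, is to guarantee that the accumulated $\nu$-defects do not interfere with the restriction to $J$. Here I would use the freedom granted by Lemma~\ref{special} to choose the defect index $j_t$ of each $e_t$ (it is controlled by the last transporting generator), together with the consolidation identities $\nu_{i,a}\nu_{i,b}^{-1}\in\langle X\rangle$, to gather all of the defects onto a single $L$-interval. Invoking proximality (Lemma~\ref{proximal}) and the hypothesis that $J^c$ has nonempty interior, I would first move $J$ into an arc missing a chosen $L_{i,k}$, carry out the construction there, and transfer the result back, so as to reduce to the easy case above; the delicate bookkeeping is in verifying that conjugation correctly converts the target back to $f_i\restriction J$ rather than to a conjugate of it. I expect the genuinely technical points to be (i) maintaining $\mathcal I$-smallness of every intermediate support throughout the transport, so that Lemma~\ref{special} applies at each step and in particular near the fast (repelling) region of $f_i$ where many small moves are needed, and (ii) ensuring that the index bookkeeping truly confines all $\nu$-junk to an open subinterval of $J^c$, so that the net element agrees with $f_i$ identically on $J$.
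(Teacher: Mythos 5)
Your opening observation is correct and is genuinely part of the picture: since $Supp(\nu_{i,j})\subseteq L_{i,j}$, each generator $\nu_{i,j}^{-1}f_i$ agrees with $f_i$ off $L_{i,j}$, the defect can be consolidated onto any single $L_{i,k}$, and the lemma is immediate when $J$ misses some $L_{i,k}$. But both of your strategies for the remaining case have fatal gaps. The fragmentation plan requires special elements $e_t=\theta_t\,\nu_{i,j_t}^{-1}\in\langle X\rangle$ whose $f$-parts are prescribed fragments $\theta_t$ of $f_i$, and no mechanism available can produce these: the only special elements one can seed with are consolidations such as $\nu_{i,j}\nu_{i,k}^{-1}$, whose $f$-part $\nu_{i,j}=f_i^{\lambda_{i,j}}$ is a \emph{full conjugate} of $f_i$, and Lemma \ref{special} transports a special element only to special elements whose $f$-part is a further conjugate $f^{g}$ of the one you started with. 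So everything reachable has $f$-part a conjugate of $f_i$, never a fragment of $f_i$; indeed a fragment of $f_i$ need not lie in $G_n$ at all (already for $n=2$ the group is free on $f_1,f_2$, and fragments of $f_1$ are generically not elements of it). The interleaving problem you raise for the product $e_M\cdots e_1$ is therefore moot, though it would be a second obstruction, since the defects $\nu_{i,j_t}^{-1}$ sit between the $\theta_t$ and do not commute past them.

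Your fallback reduction founders exactly on the point you flag and defer: with right actions, if $J\cdot h$ misses $L_{i,k}$, then conjugating $\nu_{i,k}^{-1}f_i$ back gives an element whose restriction to $J$ is $f_i^{h^{-1}}\restriction J$, not $f_i\restriction J$; this is not bookkeeping but a structural failure of covariance, and no choice of $h$ that moves such a large $J$ will also commute suitably with $f_i$. There is moreover a circularity: the conjugator supplied by Lemma \ref{proximal} lies in $H_n$, and conjugating \emph{within} $\langle X\rangle$ by it presupposes $H_n=\langle X\rangle$, the very theorem this lemma serves to prove. The paper's proof inverts your reduction: it moves the \emph{defect} rather than the interval, and never touches the factor $f_i^{\pm 1}$. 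Starting from $\nu_{i,j}\nu_{i,k}^{-1}\in\langle X\rangle$ (where $L_{i,j}\subset J_k$), it shrinks the support of $\nu_{i,j}$ into an arbitrarily small interval $I$ near $\inf(J_k)$ by conjugating by powers of $f_k\nu_{k,k}^{-1}\in\langle X\rangle$ (this leaves $\nu_{i,k}^{-1}$ unchanged and moves $Supp(\nu_{i,j})$ as $f_k$ does); then, by minimality of $G_n$, it finds a word $g_1\cdots g_m$ in the $f_l^{\pm 1}$ carrying $I$ into $J^c$ with every intermediate image $\mathcal{I}$-small, and applies Lemma \ref{special} once per letter to certify that the transported element, paired with an updated defect $\nu_{i,k_s}^{-1}$, remains in $\langle X\rangle$ --- the conjugators here are generators of $G_n$, not elements of $\langle X\rangle$, and Lemma \ref{special} exists precisely to make that legitimate. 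Finally, multiplying by $\nu_{i,k_m}f_i^{-1}\in\langle X\rangle$ yields $\gamma^{g}f_i^{-1}\in\langle X\rangle$ with $Supp(\gamma^{g})\subset J^c$, which restricts to $f_i^{-1}$ on $J$ because its only deviation from $f_i^{-1}$ is junk supported in $J^c$. That device --- keeping $f_i^{\pm1}$ intact as a factor and exporting the error term to $J^c$ inside $\langle X\rangle$ --- is the idea your proposal is missing.
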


\begin{proof}
We fix $i\in \{1,...,n\}$ and $s=-1$ (the proof for $s=+1$ is similar).
Consider the element $\nu_{i,j}$ for some $1\leq j\leq n$.
We know that $Supp(\nu_{i,j})\subset J_k$ for some $1\leq k\leq n$.
Consider the element $\nu_{i,j}\nu_{i,k}^{-1}\in \langle X\rangle$.

By minimality of the action of $G_n$ on $\mathbf{S}^1$, we can find an element $g=g_1...g_m$ for $g_i\in \{f_1,^{\pm 1},...,f_n^{\pm 1}\}$
such that $inf(J_k)\cdot g\subset J^c$.
By continuity, we find a $\mathcal{I}$-small open interval $I$ containing $inf(J_k)$ such that:
\begin{enumerate}
\item $I\cdot g\subset J^c$.
\item For each $1\leq s\leq m$, the interval $I\cdot g_1...g_s$ is $\mathcal{I}$-small.
\end{enumerate}
We let $h=f_k\nu_{k,k}^{-1}\in \langle X\rangle$.
It follows that for some large $l\in \mathbf{N}$ the element 
$$(\nu_{i,j}\nu_{i,k}^{-1})^{h^l}=\nu_{i,j}^{f_k^l}\nu_{i,k}^{-1}\in \langle X\rangle$$ 
has the property that if $\gamma=\nu_{i,j}^{f_k^l}$, then $Supp(\gamma)\subset I$ and $\gamma \nu_{i,k}^{-1}$ is a special element.
Note that this means that for each $1\leq s\leq m$, $Supp(\gamma)\cdot g_1...g_s$ is $\mathcal{I}$-small,
hence $Supp(\gamma^{g_1...g_s})$ is $\mathcal{I}$-small.

Applying Lemma \ref{special} to $\gamma\nu_{i,k}^{-1}$, we conclude that there is a $1\leq k_1\leq n$ such that
$\gamma^{g_1}\nu_{i,k_1}^{-1}\in \langle X\rangle$
is a special element.
Proceeding inductively, applying Lemma \ref{special} each time, we find $1\leq k_1,...,k_m\leq n$ such that 
$$\gamma^{g_1...g_s}\nu_{i,k_s}^{-1}\in \langle X\rangle\qquad \text{ is a special element for } 1\leq s\leq m$$
In particular, $\gamma^g\nu_{i,k_m}^{-1}\in \langle X\rangle$
and $(\gamma^g\nu_{i,k_m}^{-1})(\nu_{i,k_m} f_i^{-1})=\gamma^gf_i^{-1}\in \langle X\rangle$ is an element which satisfies the conclusion of the Lemma, since $Supp(\gamma^g)\subset J^c$.

\end{proof}

Now we prove our main theorem.

\begin{thm}
$H_n=\langle X\rangle$, and hence it is finitely generated.
\end{thm}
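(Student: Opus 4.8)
The plan is to prove the inclusion $H_n \subseteq \langle X\rangle$, since the reverse inclusion is immediate from the fact that each generator of $X$ lies in $H_n$ (every element of $X$ is a product of two elements supported on $\mathbf{S}^1$ and lies in the commutator subgroup, as $\nu_{i,j}$ is a conjugate of $f_i$, so $\nu_{i,j}^{-1}f_i \in G_n'$). Because $H_n = G_n'$ is generated by commutators $[f_i,f_j]^h$ with $h \in G_n$ (indeed by commutators of the generators and their conjugates), it suffices to show that every such commutator $[\beta_1,\beta_2]$, with $\beta_1 = f_i^h$ and $\beta_2 = f_j^h$, belongs to $\langle X\rangle$.

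The key tool is Lemma \ref{specialelements}: for any open interval $J$ with $J^c$ having nonempty interior, and any generator $f_i^s$, there is an element $\gamma \in \langle X\rangle$ that agrees with $f_i^s$ on $J$. First I would fix a commutator $[\beta_1,\beta_2]$ as above and let $K = Supp(\beta_1) \cup Supp(\beta_2)$. Since $(K)^c$ has nonempty interior (this follows because $Supp([f_i,f_j])^c$ does, being invariant under the conjugation by $h$), I can choose an open interval $J \supset K$ whose complement still has nonempty interior. The element $\beta_1 = f_i^h$ is itself a word in the $f_k^{\pm 1}$ conjugated, but more directly I would express $\beta_1$ and $\beta_2$ as restrictions: applying Lemma \ref{specialelements} repeatedly along the letters of $h$, I obtain elements $\delta_1, \delta_2 \in \langle X\rangle$ that agree with $\beta_1, \beta_2$ respectively on a neighborhood of $K$. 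Since the supports of $\beta_1,\beta_2$ are contained in $J$, and $\delta_r$ agrees with $\beta_r$ on $J$, the commutator $[\delta_1,\delta_2]$ equals $[\beta_1,\beta_2]$ provided the discrepancies between $\delta_r$ and $\beta_r$ occur only off $J$ and can be arranged not to interfere — this is where the matching-on-$J$ property is used.

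The main obstacle is bookkeeping the supports so that the agreement on $J$ genuinely forces the two commutators to coincide. The point is that $\delta_r$ agrees with $\beta_r = f_r^{(h)}$ on the interval $J$, while outside $J$ the element $\delta_r$ may do something uncontrolled. Since $Supp(\beta_r) \subset J$, whereas $\delta_r$ may move points of $J^c$, one cannot directly conclude $[\delta_1,\delta_2] = [\beta_1,\beta_2]$. I would resolve this by arranging (using proximality and Lemma \ref{specialelements} with a slightly larger interval $J'$) that the regions where $\delta_1$ and $\delta_2$ fail to agree with $\beta_1,\beta_2$ are disjoint from each other and from $J$, so that on the only region where the commutator is nontrivial — namely a neighborhood of $K \subset J$ — the two agree, and on the complement both commutators reduce to products that cancel. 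Concretely, I expect to choose $\delta_r$ supported in a controlled way so that $[\delta_1,\delta_2]$ and $[\beta_1,\beta_2]$ have identical restrictions to $J$ and both are trivial on a neighborhood of $J^c$'s interior.

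Thus the structure is: (i) reduce $H_n = \langle \langle [f_i,f_j] : i,j\rangle\rangle$ to showing $[f_i^h,f_j^h] \in \langle X\rangle$ for all generators and $h \in G_n$; (ii) choose $J$ with $K \subset J$ and $J^c$ having interior; (iii) invoke Lemma \ref{specialelements} to find $\delta_1,\delta_2 \in \langle X\rangle$ matching $f_i^h, f_j^h$ on $J$; (iv) verify the supports force $[\delta_1,\delta_2] = [f_i^h,f_j^h]$, concluding $[f_i^h,f_j^h] \in \langle X\rangle$. Combined with $\langle X\rangle \subseteq H_n$, this gives $H_n = \langle X\rangle$, and since $X$ is a finite set, $H_n$ is finitely generated.
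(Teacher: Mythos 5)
Your reduction is sound as far as it goes: $X\subset H_n$ (indeed $\nu_{i,j}^{-1}f_i=[\lambda_{i,j}^{-1},f_i^{-1}]$), and $H_n=G_n'$ is generated by the conjugates $[f_i^h,f_j^h]$, $h\in G_n$. The genuine gap is step (iii), which is asserted rather than proved. Lemma \ref{specialelements} produces elements of $\langle X\rangle$ agreeing with the \emph{generators} $f_l^{\pm 1}$ on a single interval; it says nothing about conjugates $f_i^h$, and ``applying Lemma \ref{specialelements} repeatedly along the letters of $h$'' does not assemble into a proof. Concretely, write $h=h_1\cdots h_m$ and try to build $\delta_1$ as $\tilde h_m^{-1}\cdots \tilde h_1^{-1}\,\tilde f_i\,\tilde h_1\cdots \tilde h_m$ with each $\tilde h_r\in\langle X\rangle$ matching $h_r$ somewhere. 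Chasing a point of $J$ through this word shows that each $\tilde h_r$ must agree with $h_r$ on \emph{two} arcs, the descending image $J\cdot (h_r\cdots h_m)^{-1}$ and the ascending image $J\cdot h^{-1}f_ih_1\cdots h_{r-1}$; a union of two arcs of $\mathbf{S}^1$ need not be contained in any open interval whose complement has nonempty interior (two arcs can cover the circle, and proximality of the action makes large images unavoidable for general $h$), so the lemma cannot be invoked. Nor can you instead propagate exact membership in $\langle X\rangle$ along the word: the intermediate conjugates $f_i^{h_1\cdots h_r}$ lie in $G_n$, but there is no reason for them to lie in $\langle X\rangle$, or even in $H_n$. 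There is also a secondary gap in step (iv): your identity $[\delta_1,\delta_2]=[\beta_1,\beta_2]$ is correct \emph{provided} the ``junk'' sets $Supp(\delta_1)\setminus J$ and $Supp(\delta_2)\setminus J$ are disjoint, but Lemma \ref{specialelements} gives no control over where the junk sits inside $J^c$; enlarging $J$ to $J'$ merely confines both junks to $(J')^c$ without separating them. You would need a strengthened lemma placing the junk in a prescribed open subset of $J^c$ (plausible from its proof, but not available as stated).

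The paper avoids both problems by a different induction, which moreover uses the simplicity of $H_n$ (proved just before). Fix a nontrivial $f\in\langle X\rangle$ with $Supp(f)^c$ having nonempty interior; simplicity gives $H_n=\langle\langle f\rangle\rangle_{H_n}$, so it suffices to show $(f^{\pm 1})^g\in\langle X\rangle$ for all $g\in G_n$, by induction on the word length of $g$. For $g=hf_l$ one picks $\gamma\in\langle X\rangle$ agreeing with $f_l$ on (an interval around) $Supp(f^h)$ and uses $f^{hf_l}=(f^h)^{f_l}=(f^h)^{\gamma}\in\langle X\rangle$, since $f^h\in\langle X\rangle$ by induction and $Supp(f^h)$ is $f^h$-invariant. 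Because the object being conjugated lies in $\langle X\rangle$ at every stage, membership is preserved exactly, and each step requires matching $f_l$ on only \emph{one} arc --- precisely the two features your letter-by-letter scheme lacks. If you want to rescue your outline, the realistic fix is to replace step (iii) by this conjugation-invariance induction rather than to approximate $f_i^h$ directly.
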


\begin{proof}
Let $f\in \langle X\rangle$ be any nontrivial element such that $Supp(f)^c$ has nonempty interior.
Since $H_n$ is simple, $\langle \langle f\rangle \rangle_{H_n}=H_n$.
To prove the claim, it suffices to show that for any element $g\in H_n$,
we have that $(f^{\pm 1})^g\in \langle X\rangle$.
We proceed by induction on the word length of $g$.
The base case is trivial.
Now assume that $g=hf_l$ for some $1\leq l\leq n$,
and that by the induction hypothesis $f^h\in \langle X\rangle$.
The proof for the case $g=hf_l^{-1}$ shall be similar.

Let $J=Supp(f^h)$.
Note that $J^c$ has nonempty interior.
Applying Lemma \ref{specialelements}, there exists an element $\gamma\in \langle X\rangle$ such that $\gamma\restriction J=f_l$.
We obtain that $f^{hf_l}=f^{h\gamma}\in \langle X\rangle$.
\end{proof}

We finish the proof of Theorem \ref{main2} by proving the following.

\begin{prop}
$G_n$ is left orderable.
\end{prop}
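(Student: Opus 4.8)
The plan is to exhibit $G_n$ as an extension of two left orderable groups and invoke the standard fact that left orderability is preserved under extensions (order the quotient first and break ties using the order on the kernel). I would use the short exact sequence $1 \to H_n \to G_n \to G_n^{\mathrm{ab}} \to 1$, where $H_n = G_n'$ and $G_n^{\mathrm{ab}} = G_n/H_n$ is abelian. It then suffices to prove two things: that $G_n^{\mathrm{ab}}$ is left orderable, and that $H_n$ is left orderable. The first is comparatively routine: $G_n^{\mathrm{ab}}$ is a finitely generated abelian group (a quotient of the free abelian group on the classes of $f_1,\dots,f_n$), so it is left orderable exactly when it is torsion free, and I would verify torsion freeness directly from the $n$-ring data, for instance by producing enough homomorphisms $G_n \to \mathbf{Z}$ to conclude that $G_n^{\mathrm{ab}}$ is free abelian.

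The real content is the left orderability of $H_n$, which I would obtain by showing that the given minimal, proximal action of $H_n$ on $\mathbf{S}^1$ lifts to a faithful action on $\mathbf{R}$; since a faithful orientation preserving action on $\mathbf{R}$ is the same as a left order, this simultaneously delivers the ``in fact'' clause of Theorem \ref{main2}. Recall that $H_n = \langle X\rangle$ with $X = \{\nu_{i,j}^{-1}f_i\}$, and that each such generator is supported on a \emph{proper} subinterval of $\mathbf{S}^1$, hence has a canonical lift to $\mathbf{R}$ that is the identity on the lifts of a complementary arc. Taking these canonical lifts produces a group $\widetilde{H_n} \leq \mathrm{Homeo}^+(\mathbf{R})$ sitting in a central extension $1 \to \mathbf{Z} \to \widetilde{H_n} \to H_n \to 1$, where the central $\mathbf{Z}$ is generated by the unit deck translation $T$. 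The action lifts precisely when this extension splits, i.e. when $\widetilde{H_n} \cap \langle T\rangle = \{1\}$, equivalently when the integral Euler class $e \in H^2(H_n;\mathbf{Z})$ of the circle action vanishes. A useful reduction here is that $H_n$ is simple, hence perfect, so $H^2(H_n;\mathbf{Z}) \cong \mathrm{Hom}(H_2(H_n;\mathbf{Z}),\mathbf{Z})$ is torsion free and injects into $H^2(H_n;\mathbf{R})$; thus it is enough to kill the \emph{real} Euler class, which frees me from any torsion subtleties.

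The main obstacle is exactly this vanishing: I must show that no nontrivial power $T^k$ with $k \neq 0$ can be written as a word in the canonical lifts of the generators of $X$, i.e. that every relation among them has zero net winding around the circle. This is where the explicit, interval supported generators and the proximal dynamics of Lemma \ref{proximal} have to be used in an essential way; each canonical lift fixes a lift of an open arc, and I would track how the overlapping supports of the $\nu_{i,j}^{-1}f_i$ can and cannot be chained around $\mathbf{S}^1$, aiming to show that a relation in $H_n$ never forces a nonzero translation. Granting $\widetilde{H_n}\cap\langle T\rangle = \{1\}$, the projection $\widetilde{H_n}\to H_n$ is an isomorphism onto a subgroup of $\mathrm{Homeo}^+(\mathbf{R})$, so $H_n$ is left orderable and its circle action lifts. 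Combining this with the torsion freeness of $G_n^{\mathrm{ab}}$ through the extension then shows that $G_n$ is left orderable. I expect the winding (Euler class) computation to be the genuinely delicate step, with the extension formalism and the abelianization playing only a supporting role.
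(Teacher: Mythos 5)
Your overall strategy---lift the circle action to $\mathbf{R}$ by canonical lifts of interval-supported generators and show the lifted group meets the deck translations trivially---is exactly the mechanism the paper uses, but your write-up leaves unproven precisely the two claims that carry all the content. First, the winding-number step. You correctly identify ``no relation forces a nonzero translation'' as the delicate point, but you only announce a plan (``I would track how the overlapping supports \dots can and cannot be chained''); nothing in the proposal actually rules out a word in the lifts being equal to a nontrivial translation. This is where the paper does its real work: it lifts $G_n$ itself (not $H_n$), whose generators $f_i$ are supported on the single intervals $J_i$ and satisfy the dynamical condition $(*)$; it then follows the orbit of $x=\inf(J_1)$ under a putative word equal to a nontrivial translation, deletes trivial steps and backtracking, and uses $(*)$ inductively to show that a reduced trajectory can never return to an endpoint of the covering intervals, a contradiction. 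Your choice to lift $H_n$ instead makes this step strictly harder: the generators $\nu_{i,j}^{-1}f_i$ of $H_n$ are supported on unions of two intervals and do not satisfy $(*)$, so the ping-pong bookkeeping that drives the paper's induction is simply not available for them. (The Euler-class reformulation and the reduction from the integral to the real class are fine, but they repackage the difficulty rather than reduce it.)

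Second, the reduction through $1\to H_n\to G_n\to G_n^{\mathrm{ab}}\to 1$ requires $G_n^{\mathrm{ab}}$ to be torsion free, and this is a genuinely nontrivial unproven claim rather than a routine verification: $G_n$ is given dynamically, not by a presentation (the paper notes its isomorphism type is unknown for $n\geq 3$), so ``producing enough homomorphisms $G_n\to\mathbf{Z}$'' cannot be done by inspecting the generators, and left orderability of $G_n$ would not even imply the claim, since abelianizations of left orderable groups can have torsion (the Klein bottle group is the standard example). The paper sidesteps all of this: once $G_n$ itself lifts, $G_n$ is left orderable, and $H_n$ inherits left orderability for free as a subgroup, with the lift of the $H_n$-action obtained by restriction. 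So the advice is to drop the extension formalism, run the winding argument directly on the generators $f_1,\dots,f_n$ of $G_n$, and supply the actual combinatorial argument via condition $(*)$ for why a reduced word cannot wind around the circle; as it stands, the proposal is a correct outline of the right kind of proof with its two load-bearing steps missing.
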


\begin{proof}
Recall that a group is left orderable if and only if it admits a faithful action by orientation preserving homeomorphisms on the real line.
Recall that $\textup{Homeo}^+(\mathbf{S}^1)$ admits a lift to $\mathbf{R}$ as follows:
$$1\to \mathbf{Z}\to \widetilde{\textup{Homeo}^+}(\mathbf{S}^1)\to \textup{Homeo}^+(\mathbf{S}^1)\to 1$$
where $\mathbf{Z}=\langle t\to t+n\mid n\in \mathbf{Z}\rangle$ and $\widetilde{\textup{Homeo}^+}(\mathbf{S}^1)$ is the centralizer of
$\langle t\to t+n\mid n\in \mathbf{Z}\rangle$ in $\textup{Homeo}^+(\mathbf{R})$.

We claim that the lift $\widetilde{G_n}$ of $G_n$ to a subgroup of $\textup{Homeo}^+(\mathbf{R})$ is isomorphic to $G_n$.
Denote the lifts of the generators $f_i$ as $h_i$, for each $1\leq i\leq n$.
It suffices to show that $$\widetilde{G_n}\cap \langle t\to t+n\mid n\in \mathbf{Z}\rangle=\{id_{\mathbf{R}}\}$$
If this were not the case, then we can find a word $$g=g_1...g_m\qquad g_i\in \{h_1,...,h_n\}$$
with the property that $x\cdot g=x+n$ for $n\in \mathbf{Z}\setminus \{0\}$.
The corresponding word $\gamma=\gamma_1...\gamma_m$, where each $h_i^{\pm 1}$ is replaced by $f_i^{\pm 1}$, must satisfy the following condition.
For any point in $x\in \mathbf{S}^1$, the sequence $$x, x\cdot \gamma_1,x\cdot \gamma_1\gamma_2,...,x\cdot \gamma_1...\gamma_m=x$$
must "go around the circle" a nontrivial number of times to arrive back to itself.
We will show that this is impossible, i.e. the above can only happen with "backtracking".

Let $x=inf(J_1)$. Consider the sequence 
$$x, x\cdot \gamma_1,x\cdot \gamma_1\gamma_2,...,x\cdot \gamma_1...\gamma_m=x$$
If $$x\cdot \gamma_1...\gamma_i=x\cdot \gamma_1...\gamma_{i+1}$$
we delete the occurrence of $\gamma_{i+1}$ from the word $\gamma_1...\gamma_m$, and adjust the indices
(replacing $j$ by $j-1$ for $j>i+1$) to obtain a new word $\gamma_1...\gamma_{m-1}$.
Whenever we find \emph{backtracking}, i.e. $$x\cdot \gamma_1...\gamma_{i-1}=x\cdot \gamma_1...\gamma_{i+1}$$
in $\gamma_1...\gamma_m$,
we remove $\gamma_i\gamma_{i+1}$ from the word, and adjust the indices (replacing $j$ by $j-2$ for $j>i+1$) to obtain a new word $\gamma_1...\gamma_{m-2}$.
At the end of this process, we obtain a new word $\gamma_1...\gamma_k$ such that
$$x, x\cdot \gamma_1,x\cdot \gamma_1\gamma_2,...,x\cdot \gamma_1...\gamma_k=x$$
such that for each $1\leq i<k$ $$x\cdot \gamma_1...\gamma_i\neq x\cdot \gamma_1...\gamma_{i+1}$$
and for each $1\leq i<k-1$ $$x\cdot \gamma_1...\gamma_i\neq x\cdot \gamma_1...\gamma_{i+2}$$
%and $$x=x\cdot \gamma=x\cdot \gamma_1...\gamma_k$$

In this situation, an elementary inductive argument using condition $(*)$ implies that 
$$x\cdot \gamma_1...\gamma_k\notin \{inf(J_i),sup(J_i)\mid 1\leq i\leq n\}$$
contradicting our assumption that $x=inf(J_1)$.

%Note that our hypothesis implies that it must be the case that for some $l\leq l\leq k$, $$\gamma_l=\gamma_{l+1}=...=\gamma_k\in \{f_n^{\pm 1}\}\qquad %\gamma_{l-1}\in \{f_{n-1},f_1\}$$
%Let $y=\gamma_1...\gamma_{l-2}$.
%In each case, condition $(*)$ implies that $$y\cdot \gamma_{l-1}...\gamma_k\neq x$$
%Hence we obtain a contradiction.

%Let $$T=int(\mathbf{S}^1\setminus L)\qquad L=\bigcup_{1\leq i\leq n} L_i$$
%(Recall the definition of the sets $L_i$ from Definition \ref{Li}.)
%Since each $\gamma_i$ is a generator in $X$ of the form $(f_i\nu_{i,j}^{-1})^{\pm 1}$,
%from condition $(*)$, it follows that for each $1\leq j\leq k$ we have that $x\cdot \gamma_1...\gamma_j\in T$.
%In particular, $x\cdot \gamma_1...\gamma_k\in T$ and hence $x\cdot \gamma\in T$.
%Since $x\notin T$, this is a contradiction.

\end{proof}

\end{document}